\newtheorem{thm}{Theorem}[section]
\newtheorem{lm}[thm]{Lemma}
\newtheorem{defn}[thm]{Definition}
\newtheorem{prop}[thm]{Proposition}
\newtheorem{coro}[thm]{Corollary}
\newtheorem{rmk}[thm]{Remark}
\newtheorem{conj}[thm]{Conjecture}
\newcommand{\scal}[1]{\left\langle#1\right\rangle}
\newcommand{\op}[1]{\left(#1\right)}
\newcommand{\eps}{\epsilon}
\newcommand{\bbm}{\begin{bmatrix}}
\newcommand{\ebm}{\end{bmatrix}}
\newcommand{\bpm}{\begin{pmatrix}}
\newcommand{\epm}{\end{pmatrix}}
\newcommand{\bsm}{\left(\begin{smallmatrix}}
\newcommand{\esm}{\end{smallmatrix}\right)}
\newcommand{\bsbm}{\left[\begin{smallmatrix}}
\newcommand{\esbm}{\end{smallmatrix}\right]}
\newcommand\ba{\begin{align*}}
\newcommand\ea{\end{align*}}
\newcommand\baa{\begin{align}}
\newcommand\eaa{\end{align}}
\newcommand\ben{\begin{enumerate}}
\newcommand\een{\end{enumerate}}
\newcommand{\figref}[1]{\hyperref[#1]{Figure \ref{#1}}}
\newcommand{\lemref}[1]{\hyperref[#1]{Lemma \ref{#1}}}
\newcommand{\thmref}[1]{\hyperref[#1]{Theorem \ref{#1}}}
\newcommand{\conjref}[1]{\hyperref[#1]{Conjecture \ref{#1}}}
\newcommand{\propref}[1]{\hyperref[#1]{Proposition \ref{#1}}}
\newcommand{\corref}[1]{\hyperref[#1]{Corollary \ref{#1}}}
\newcommand{\defref}[1]{\hyperref[#1]{Definition \ref{#1}}}
\newcommand{\rmkref}[1]{\hyperref[#1]{Remark \ref{#1}}}
\newcommand{\qref}[1]{\hyperref[#1]{Question \ref{#1}}}
\newcommand{\secref}[1]{\hyperref[#1]{\S\ref{#1}}}
\newcommand{\appref}[1]{\hyperref[#1]{Appendix \ref{#1}}}
\newcommand{\R}{\mathbf{R}}
\newcommand{\C}{\mathbf{C}}
\newcommand{\h}{\mathbf{H}}
\newcommand{\Q}{\mathbf{Q}}
\newcommand{\Z}{\mathbf{Z}}
\newcommand{\N}{\mathbf{N}}
\newcommand{\GL}{\mathrm{GL}}
\newcommand{\cB}{\mathcal{B}}
\newcommand{\cE}{\mathcal{E}}
\newcommand{\cH}{\mathcal{H}}
\newcommand{\cN}{\mathcal{N}}
\newcommand{\cS}{\mathcal{S}}
\newcommand{\ga}{\alpha}     
\newcommand{\gl}{\lambda}    
\renewcommand{\hat}{\widehat}
\definecolor{blue}{rgb}{0,0,1}
\definecolor{red}{rgb}{1,0,0}
\definecolor{green}{rgb}{0,.6,.2}
\definecolor{purple}{rgb}{1,0,1}
\numberwithin{equation}{section}
\title[Rational points on the sphere]{Equidistribution, covering radius, and Diophantine approximation for rational points on the sphere}
\author{Claire Burrin}
\address{Institute of Mathematics, University of Zurich}
\email{claire.burrin@math.uzh.ch}
\author{Matthias Gr\"obner}
\address{Institute of Mathematics, EPFL}
\email{matthias.grobner@epfl.ch}
\thanks{We are grateful to Tim Browning for answering some questions. CB is supported by Swiss National Science Foundation Grants No.~201557 and No.~10003145. MG is supported by Swiss National Science Foundation Grant No.~215337.}
\begin{document}

\begin{abstract}
We study the distribution of rational points of fixed height on the sphere at shrinking scales. For the two-dimensional sphere, we prove an unconditional variance estimate for primitive square-level Linnik sets, essentially matching the random-model prediction. We obtain almost-everywhere equidistribution in caps down to the optimal scale $R\gg n^{-1/2+\delta}$, pointwise equidistribution down to $R\gg n^{-1/4+o(1)}$, and Wasserstein equidistribution down to the optimal bound. We also derive applications to covering, intrinsic Diophantine approximation, and Linnik's conjecture on sums of two squares and a mini-square.

\end{abstract}

\maketitle

\section{Introduction}

Understanding the distribution of point sets on the sphere is a classical problem with wide-ranging applications, including numerical integration and approximation, quantum information theory, and molecular structures. Depending on the purpose, different measures are used to assess what constitutes a good distribution, the most common being equidistribution, covering and packing radii, and minimal energy. 

\begin{figure}[ht]
  \centering
  \includegraphics[scale=.25]{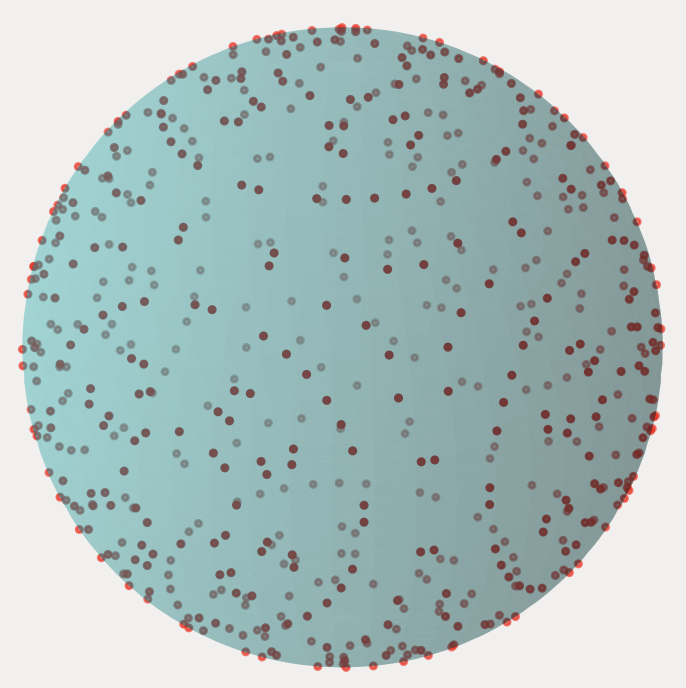} \includegraphics[scale=.25]{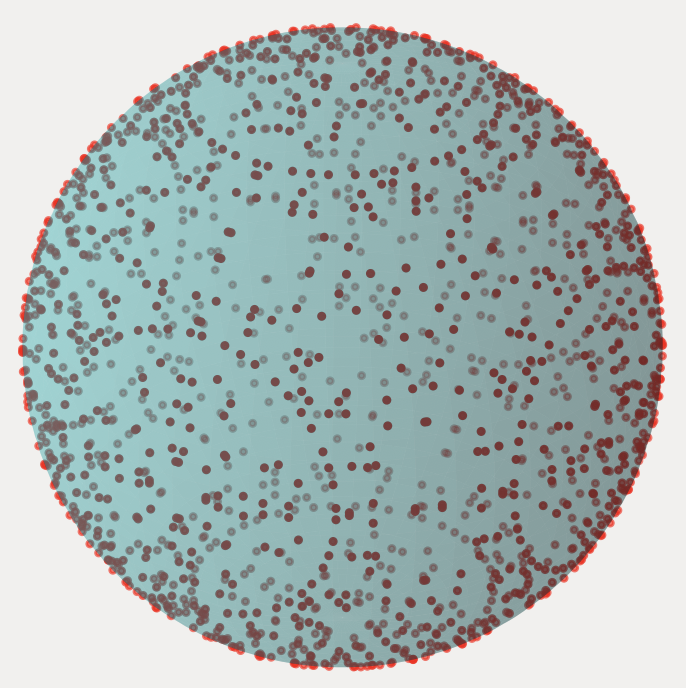} \includegraphics[scale=.25]{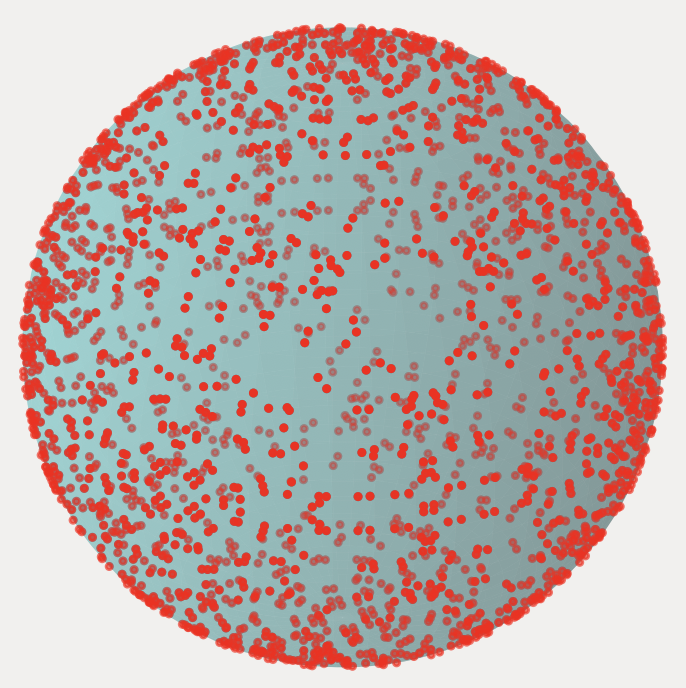} \includegraphics[scale=.227]{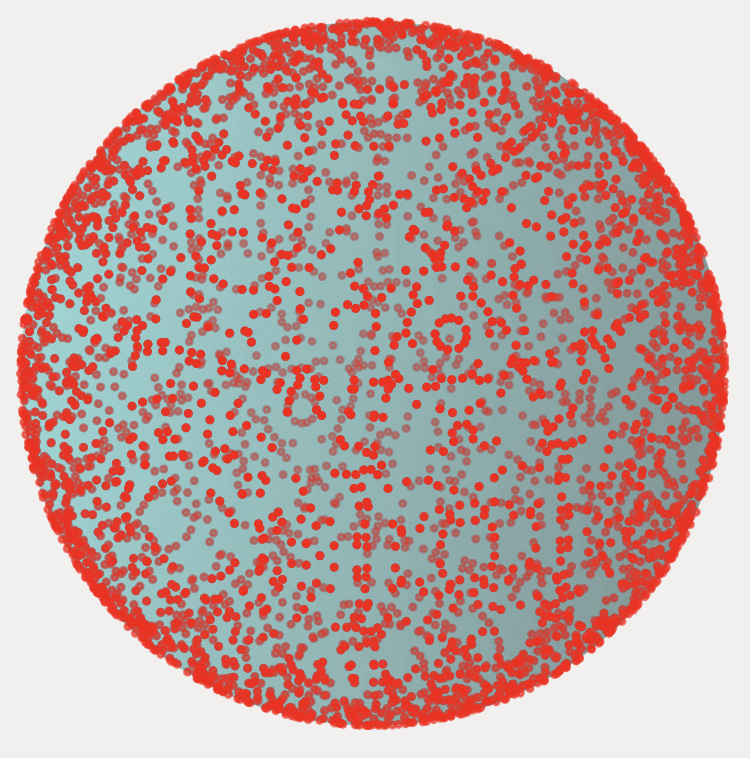}
  \caption{$\Omega_n\subset S^2$ for $n=101,201,401,701$.}\label{fig:Omega_n}
  \end{figure}

In a series of influential works, Bourgain, Rudnick, and Sarnak promoted the Linnik sets
$$
\cE_n = \{ n^{-1/2}(x,y,z) \mid (x,y,z)\in\Z^3,\, x^2+y^2+z^2 =n \} \subset S^2, \quad n\neq 4^a(8b+7)
$$
as a rich arithmetic model of pseudorandom points on the sphere. They studied their local statistics, variance in shrinking regions, and covering properties \cite{bourgain2012local,bourgain2016spatial},  proving that,  possibly under certain arithmetic hypotheses such as GRH, GLH, or the absence of Siegel zeros, these sets behave like random points sampled independently according to the normalized surface area measure $\mu$. One such result is that the variance for Linnik points in shrinking caps is as small as for a random set.

\begin{thm}\label{thm:BRS}\cite{bourgain2016spatial}
    Let $C_R(\alpha)$ be a spherical cap of radius $R$, centered at $\alpha\in S^2$. Assume  the Lindelöf Hypothesis for standard $\GL(2)/\Q$ $L$-functions. Then for squarefree $n\not\equiv 7$ (mod 8), we have 
    \begin{align*}
        \int_{S^2} \left( |\cE_n\cap C_R(\alpha)| -|\cE_n|\mu(C_R)\right)^2 d\mu(\alpha) \ll n^\eps |\cE_n|\mu(C_R)
    \end{align*}
for any $\eps>0$.
\end{thm}

The point-sets we will consider in this article are rational points on $S^2$ of fixed height, given by
\begin{align*}
    \Omega_n = \{ n^{-1}(x,y,z) \mid (x,y,z)\in\Z^3,\, x^2+y^2+z^2=n^2,\, {\rm gcd}(x,y,z,n)=1\}.
\end{align*}
We restrict to odd $n$, since $\Omega_n$ is empty if and only if $n$ is even. These sets form a natural primitive square-level subsequence of the Linnik sets. Our first result is an unconditional analogue of the variance estimate for Linnik points.

\begin{thm}\label{thm:variance}
    Let $n$ be odd. Then 
        \begin{align*}
        \int_{S^2} \left( |\Omega_n\cap C_R(\alpha)| -|\Omega_n|\mu(C_R)\right)^2 d\mu(\alpha) \ll n^{\eps} |\Omega_n|\mu(C_R)
    \end{align*}
for any $\eps>0$.
\end{thm}

Since $|\Omega_n|=n^{1+o(1)}$ (when $n$ is odd), the average number of points in a cap of radius $R$ is $\asymp nR^2$. Thus $R\asymp n^{-1/2}$ is the critical
scale at which a cap contains only constantly many points on average.
The variance bound implies almost-everywhere equidistribution down to the optimal scale.

\begin{coro}\label{coro:a.e. optimal}
For every $\delta>0$, outside a set of centers $\alpha$ of measure $o(n^{-\delta})$, we have
\begin{align*}
    \frac{|\Omega_n\cap C_R(\alpha)|}{|\Omega_n|\mu(C_R)}\to 1
\end{align*}
uniformly for $R\gg n^{-1/2+\delta}$.
\end{coro}

The method of proof also yields the following pointwise small-scale equidistribution.

\begin{thm}\label{thm:dim 2}
For every center $\alpha\in S^2$, odd $n$,
\begin{align*}
    \frac{|\Omega_n\cap C_R(\alpha)|}{|\Omega_n|\mu(C_R)}\to 1
\end{align*}
whenever $R\gg n^{-1/4+\eps}$, for any $\eps>0$.
\end{thm}

The same spectral second moment also gives an optimal quantitative
equidistribution statement against Lipschitz test functions. Let
$$
\mu_n:=\frac{1}{|\Omega_n|}\sum_{x\in\Omega_n}\delta_x
$$
be the discrete probability measure supported on $\Omega_n$, and let $W_1$ denote
the 1-Wasserstein distance on $S^2$, with respect to the geodesic
distance. By Kantorovich--Rubinstein duality \cite{Villani2003}, this is 
$$
W_1(\mu_n,\mu) = \sup_{{\rm Lip}(f)\leq 1} \left|
\frac{1}{|\Omega_n|}\sum_{x\in\Omega_n}f(x)-\int_{S^2}f\,d\mu
\right|.
$$
In transport terms, the quantity $W_1(\mu_n,\mu)$ measures how far, on average, surface mass must move to reach $\Omega_n$. The following statement shows that this distance is optimal, up to a factor $n^{o(1)}$. 

\begin{thm}\label{thm:wasserstein}
For every odd \(n\), we have
\[
W_1(\mu_n,\mu)\ll n^{-1/2+o(1)}.
\]
This rate is optimal, up to the $n^{o(1)}$ factor, among all probability
measures supported on $|\Omega_n|$ points.
\end{thm}

This result is weaker than cap discrepancy, since cap indicators are discontinuous, but it is a natural quantitative metric for pseudorandom point configurations. Recent work of Sanchez Garza \cite{SanchezGarza2026} uses the 1-Wasserstein distance in a closely related arithmetic setting, proving quantitative equidistribution for arithmetic spherical harmonics conditionally on GLH.

The restriction to square-levels is essential. A spectral expansion argument reduces the variance and equidistribution problems to bounding Fourier coefficients of half-integral weight theta series. For general Linnik levels, Waldspurger's formula relates these coefficients to central values of automorphic $L$-functions, and this is the source of the Lindel\"of-type hypotheses in the work of Bourgain, Rudnick, and Sarnak, and the subsequent work of Humphries and Radziwi\l\l\,  \cite{HumphriesRadziwill2022}. At square levels, Shimura's correspondence relates the size of these coefficients to that of coefficients of holomorphic cusp forms of integer weight, for which the Ramanujan--Petersson conjecture holds. In dimension two, a further arithmetic input comes from the quaternionic Hecke symmetries of $S^2$ arising from the Hamilton quaternion algebra. These form a commutative algebra of Hecke operators on $L^2(S^2)$ and their simultaneous eigenforms are related to classical cuspidal newforms via an explicit realization of the Jacquet--Langlands correspondence. Choosing a simultaneous Hecke basis avoids the dimension loss in the usual theta-series argument and yields the variance estimate above.

This argument also yields a conditional extension of the variance theorem of Bourgain--Rudnick--Sarnak to all admissible levels. Indeed, every $n\not\equiv 0,4,7$ (mod 8) admits a decomposition $n=d\ell^2$, where $d$ is squarefree and $d\not\equiv7$ (mod 8), and $\ell$ is odd. The squarefree part $d$ is the fundamental level entering Waldspurger's formula, while the odd square factor $\ell^2$ is controlled by the argument sketched above.

\begin{thm}\label{thm:BRS all levels}
Under the Lindel\"of Hypothesis for standard $\GL(2)/\Q$ $L$-functions, the variance estimate of \cref{thm:BRS} holds for every $n\not\equiv 0,4,7$ (mod 8).
\end{thm}

For completeness, and to put the $S^2$ result in context, we also record the following uniform small-scale equidistribution theorem in all dimensions.

\begin{thm}\label{thm:small scale}
For every $d\geq  2$ and every spherical cap $C_R(\alpha)\subset S^d$, there exists a set $\cN_d\subset\N$ of positive density such that
\begin{align*}
    \frac{|\Omega_n\cap C_R(\alpha)|}{|\Omega_n|\mu(C_R)} \to 1
\end{align*}
as $n \to \infty, n \in \mathcal N_d$, whenever $R\gg n^{-(d-1)/(2d+1)+ \eps}$.
\end{thm}

Small-scale equidistribution for rational points on the sphere is naturally connected to the covering radius problem and to intrinsic Diophantine approximation. We review these connections and the current state-of-the-art in the rest of this introduction. We also discuss progress towards a stronger version of the following conjecture of Linnik, asking for the existence of primitive solutions \cite[Chapter XI]{linnik1968ergodic}. 

\begin{conj}[Linnik]
  For each $n\not\equiv 0,4,7$ (mod 8), the Diophantine equation $x^2 +y^2+z^2=n$ is solvable in the integers with $|z|=O(n^\eps)$ (for any $\epsilon >0$).
 \end{conj}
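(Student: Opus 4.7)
The plan is to translate the conjecture into a small-scale equidistribution statement for integer representations of $n$ as a sum of three squares on the sphere $\sqrt{n}\,S^2$, and then to emulate the Hecke-theoretic approach of \cref{thm:dim 2}. A solution $(x,y,z)$ of $x^2+y^2+z^2=n$ with $|z|\leq n^{\epsilon}$ corresponds to an integer point on $\sqrt{n}\,S^2$ lying inside the equatorial strip of angular width $O(n^{\epsilon-1/2})$ about $\{z=0\}$. Since $r_3(n)\asymp \sqrt{n}\,H(n)$ for $n$ in an admissible residue class --- where $H(n)$ is a Hurwitz-type class number, of size $n^{-o(1)}$ for $n$ squarefree --- the expected count of representations inside the strip is $\gg n^{\epsilon/2}$, so any rigorous equidistribution-in-strips statement at scale $n^{-1/2+\epsilon}$ would suffice to produce the desired solutions.

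I would attempt to establish this equidistribution by paralleling the structure of \cref{thm:dim 2}: smoothly expand the indicator of the equatorial strip in spherical harmonics; express the resulting Fourier--theta moments in terms of Fourier coefficients of half-integral weight modular forms via Shimura's correspondence and Waldspurger's formula; and amplify via the commutative Hecke algebra on $L^2(S^2)$ induced by the maximal order in the Hamilton quaternions. The equatorial strip is a specific, not generic, region, so only the uniform exponent of \cref{thm:dim 2} is available; pushing the argument through should unconditionally give a bound of the shape $|z|\ll n^{1/4+o(1)}$ on the smallest coordinate of some representation. The ``primitive'' refinement imposes $\gcd(x,y,z)=1$, which can be enforced by a standard M\"obius inversion, harmless when $n$ is not highly divisible.

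The main obstacle is to push the equidistribution scale past $n^{-1/4}$ down to $n^{-1/2+\epsilon}$. In the integer-weight setting of \cref{thm:dim 2} this exponent becomes accessible through Deligne's Ramanujan bound after Jacquet--Langlands transfer; but for representations of $n$ on $\sqrt{n}\,S^2$ the associated $L$-functions are of half-integral weight, where only Iwaniec's subconvex bound and its descendants --- essentially the Weyl exponent --- are available, falling well short of Ramanujan. An unconditional proof of Linnik's conjecture along these lines would seem to require a genuinely new Hecke-theoretic input at half-integral weight; short of that, the same machinery should still yield subpolynomial improvements over the best known exponent for $|z|$, which is presumably what the paper's subsequent discussion of progress towards the conjecture makes precise.
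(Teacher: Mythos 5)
You have correctly recognised that the statement is a conjecture, and your high-level plan --- translate Linnik's question to small-scale equidistribution near the equator, run a spherical-harmonic/theta-series argument, amplify via the Hurwitz-order Hecke operators and the Shimura/Waldspurger lift --- is exactly the method the paper uses for its partial result, Theorem~\ref{thm:Linnik}. Your diagnosis of the ultimate obstruction (half-integral weight subconvexity falling well short of Ramanujan) also matches the paper's discussion.

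The gap is in the claim that this machinery "should unconditionally give a bound of the shape $|z|\ll n^{1/4+o(1)}$." You are conflating two distinct settings. The quaternionic Hecke operators, the Eichler commutation relation, and the Jacquet--Langlands transfer that underlie the $R\gg n^{-1/4+o(1)}$ exponent of Theorem~\ref{thm:dim 2} act on $L^2(S^2)$ and control the sets $\Omega_n$, which parametrise \emph{primitive} representations of the perfect square $n^2$. They give no control whatsoever over lattice points on $\sqrt{n}\,S^2$ for a general admissible $n$. For general $n$ one is genuinely in half-integral weight territory, where the best that the corresponding equidistribution machinery can extract is still far from $n^{-1/4}$; indeed the current record for general admissible $n$ is Golubeva--Fomenko's $|z|=O(n^{1/2-7/705+\eps})$. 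This is precisely why the paper restricts its theorem to odd perfect squares.

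Even in that restricted setting, the paper's unconditional exponent is $|z|=O(n^{1/3+\eps})$, not $n^{1/4+\eps}$. Working the sandwich inequality and Cauchy--Schwarz/Parseval argument through for an equatorial \emph{annulus} of width $n^{-\delta}$ rather than a cap produces an error term of size $O\left(n^{-(1-3\delta)/4}d(n)\right)$, forcing $\delta<1/3$; the cap exponent $1/4$ does not transfer. The $n^{1/4+\eps}$ bound would require the almost-every-cap exponent (radius $n^{-1/2+o(1)}$, obtained from the variance estimate in Theorem~\ref{thm:dim 2}) to hold specifically for the equatorial annulus, and the paper explicitly leaves this open, pointing out that this region is exactly where the large observed holes in the distribution of $\Omega_n$ sit. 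Finally, a minor slip: the expected count of representations in the strip of width $n^{\epsilon-1/2}$ is $\gg n^{\epsilon+o(1)}$ for squarefree admissible $n$, not $n^{\epsilon/2}$.
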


\subsection{Covering exponents and Diophantine approximation}

Given a point configuration $X_N\subset S^d$, the covering radius is the least radius $R=R(X_N)$ so that the sphere is covered by spherical caps $C_R$ of radius $R$ centered at the points in $X_N$, i.e.,
$$
S^d = \bigcup_{x\in X_N}  \overline{C_R(x)}.
$$ 
Equivalently, the covering radius is the largest distance from a point in $S^d$ to the nearest point in $X_N$. Given a sequence of point sets $(X_N)_N \subset S^d$, their asymptotic covering rate can be captured by the covering exponent \cite{sarnak2015letter}
\begin{align}\label{eq:def K}
 K((X_N)) \coloneqq  - \liminf_{N\to\infty} \frac{\log |X_N|}{\log \mu(C_{R(X_N)})},
\end{align}
where $|X_N|$ denotes the cardinality of $X_N$. A sequence of point sets $(X_N)_N$ is considered optimally distributed (with respect to covering) if the covering exponent is 1, or equivalently, if the covering radii satisfy $R(X_N,S^d) = |X_N|^{-1/d + o(1)}$ as $N \rightarrow \infty$. Random point-sets on the sphere and spherical $t$-designs with $N_t \asymp t^d$ points \cite{BondarenkoRadchenkoViazovska2013}, for example, are optimally distributed with respect to covering \cite{ReznikovSaff2015,Yudin1995}.

A small covering exponent  for the sequence $\Omega_T$ of all rational points of height (smallest common denominator in reduced form) up to $T$   implies that for any (irrational) point on the sphere, there exists a rational point on the sphere sufficiently close to it. The sizes of these finite sets are of order $|\Omega_T|\asymp T^d$ for $T\gg1$.

We then speak of intrinsic Diophantine approximation. This theme, put forward by Lang in his influential ``Report on Diophantine Approximation'' \cite{Lang1965}, picked up traction in the last two decades as it became apparent that often the best rational approximants are in fact intrinsic.  As in the classical theory, badly approximable points on the sphere are of measure zero; this motivates the following definition.\footnote{ Our definition differs from that of average covering exponent introduced by Sarnak \cite{sarnak2015letter}.}

\begin{defn}[Generic Covering Exponent]
Let $(X_N)_N \subset S^d$ be a sequence of point sets. We define the generic covering exponent as
  \begin{align*}
    K_\mu((X_N)) \coloneqq  \inf_{(R_N)_N} \left( - \liminf_{N\to\infty} \frac{\log |X_N|}{\log \mu(C_{R_N})} \right),
  \end{align*}
  where the infimum is taken over all sequences of radii $(R_N)_N$ such that $\mu \big( S^d\setminus \cup_{x\in X_N} C_{R_N}(x) \big) \to 0$.  
\end{defn}

Thus $K_\mu$ measures the smallest possible asymptotic exponent among radius sequences that cover almost all of the sphere. 

By now, analogues of classical Diophantine approximation results (Dirichlet, Khintchine, Schmidt) have been established for spheres \cite{KleinbockMerrill2015,AlamGhosh2022,KelmerYu2023,ouaggag2024central}. These results imply the following strong results for the covering exponent.

\begin{prop}[Covering Exponents for $(\Omega_T)$]\label{thm:Diophantine}
  For every $d\geq2$ we have
  $
  K((\Omega_T))=2$ and $ K_\mu((\Omega_T))=1.$
\end{prop}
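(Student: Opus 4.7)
Since $|\Omega_T|\asymp T^d$ and $\mu(C_R)\asymp R^d$ on $S^d$, the two equalities $K((\Omega_T))=2$ and $K_\mu((\Omega_T))=1$ are equivalent to the asymptotic statements $R(\Omega_T)=T^{-1/2+o(1)}$ and $R_\mu(\Omega_T)=T^{-1+o(1)}$ as $T\to\infty$. My plan is to establish four matching inequalities, each relying on results already available in the cited literature.

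For the generic lower bound $K_\mu\geq 1$, I would use a volume-packing argument: by definition of $R_\mu$, the union $\bigcup_{x\in\Omega_T}C_{R_\mu(T)}(x)$ has measure at least $1-o(1)$, so $|\Omega_T|\,\mu(C_{R_\mu(T)})\gg 1$, forcing $R_\mu(T)\gg T^{-1}$. For the matching upper bound $K_\mu\leq 1$, I would fix $\delta>0$, set $R=T^{-1+\delta}$, and show via a second-moment/Chebyshev argument that $\Omega_T\cap C_R(\alpha)\neq\emptyset$ for all $\alpha$ outside an exceptional set of measure $o(1)$. The mean count $|\Omega_T|\,\mu(C_R)\asymp T^{d\delta}$ diverges, and the required control of the variance is supplied by the Khintchine-type small-scale equidistribution for spheres established in \cite{KleinbockMerrill2015,AlamGhosh2022,KelmerYu2023,ouaggag2024central}.

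For the worst-case upper bound $K\leq 2$, I would invoke the intrinsic Dirichlet theorem on $S^d$ from the cited literature: every $\alpha\in S^d$ admits a point $p/q\in\Omega_T$ with $|\alpha-p/q|\ll T^{-1/2}$, giving $R(\Omega_T)\ll T^{-1/2}$. For the matching lower bound $K\geq 2$, it is enough to produce a single badly approximable $\alpha^\ast\in S^d$ satisfying $|\alpha^\ast-p/q|\geq c\,q^{-1/2}$ for every rational point $p/q\in S^d\cap\Q^{d+1}$; the intrinsic Schmidt-type results in the cited works in fact guarantee a full-Hausdorff-dimension set of such $\alpha^\ast$. For any such $\alpha^\ast$ and any $T$, the nearest point of $\Omega_T$ lies at distance at least $c\min_{q\leq T}q^{-1/2}=c\,T^{-1/2}$, so $R(\Omega_T)\geq c\,T^{-1/2}$.

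The main obstacle, deferred to the cited literature, is the intrinsic Dirichlet exponent $1/2$ on $S^d$---notably weaker than the naive $1/d$ suggested by the average spacing $T^{-1}$ of $|\Omega_T|\asymp T^d$ points on a $d$-dimensional manifold. This same phenomenon is what produces the gap between worst-case ($K=2$) and generic ($K_\mu=1$) covering: the set $\Omega_T$ clusters near special rational directions, leaving unavoidable covering gaps of size $T^{-1/2}$ near badly approximable points, while the bulk of $S^d$ is covered at the optimal rate $T^{-1}$.
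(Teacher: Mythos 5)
Your bounds $K\leq 2$, $K_\mu\geq 1$, and $K_\mu\leq 1$ match the paper's approach: the intrinsic Dirichlet theorem of Kleinbock--Merrill for the worst-case upper bound, a trivial volume-packing for the generic lower bound, and a second-moment argument (the paper simply cites Kelmer--Yu Thm 1.8(b), but a Chebyshev argument as you sketch is the underlying mechanism) for the generic upper bound.

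The lower bound $K\geq 2$ is where your argument breaks down. You posit a point $\alpha^\ast\in S^d$ with $|\alpha^\ast - p/q|\geq c\,q^{-1/2}$ for \emph{every} rational point $p/q$ on the sphere. No such $\alpha^\ast$ exists: the intrinsic Dirichlet theorem (recalled just below \cref{thm:Diophantine} in the paper) gives, for every irrational $\alpha$, infinitely many $n$ and points $p/n\in\Omega_n$ with $|\alpha - p/n|\leq c'n^{-1}$; since $c'n^{-1}<c\,n^{-1/2}$ once $n>(c'/c)^2$, your hypothesis is violated infinitely often. The Schmidt-type results you cite construct points satisfying the genuine badly-approximable condition $|\alpha^\ast - p/q|\geq c\,q^{-1}$ (matching the Dirichlet exponent), which only yields $\min_{q\leq T}|\alpha^\ast - p/q|\geq cT^{-1}$, i.e.\ $K\geq 1$, not $K\geq 2$.

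The paper's argument is elementary and avoids this issue entirely. It uses the arithmetic repulsion identity
\begin{align*}
\bigl|\tfrac{m}{n}-\tfrac{m'}{n'}\bigr|^2 = \frac{2(nn'-\langle m,m'\rangle)}{nn'} \geq \frac{2}{nn'}
\end{align*}
for distinct rational points on $S^d$. Taking $n'=1$ (the north pole), every other $x\in\Omega_T$ lies at distance $\geq\sqrt{2/T}$ from the pole. A point at distance $\tfrac12\sqrt{2/T}$ from the pole is therefore at distance $\gg T^{-1/2}$ from \emph{all} of $\Omega_T$, forcing $R(\Omega_T)\gg T^{-1/2}$ and hence $K\geq 2$. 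The key structural fact is that the uncovered holes cluster around low-height rational points, not around a fixed Diophantine-generic $\alpha^\ast$ --- indeed, as $T\to\infty$ these holes shrink (at rate $T^{-1/2}$, slower than the typical spacing $T^{-1}$), so no single fixed point witnesses the lower bound; one must let the witness move with $T$.

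Your concluding heuristic --- clustering near special rational directions producing worst-case gaps of $T^{-1/2}$ but generic covering at $T^{-1}$ --- is exactly right; the issue is that the formal mechanism you chose (a fixed badly approximable $\alpha^\ast$ with exponent $1/2$) cannot implement it. You should replace that step with the arithmetic repulsion argument above.
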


\begin{figure}[ht]
  \centering
  \includegraphics[scale=.25]{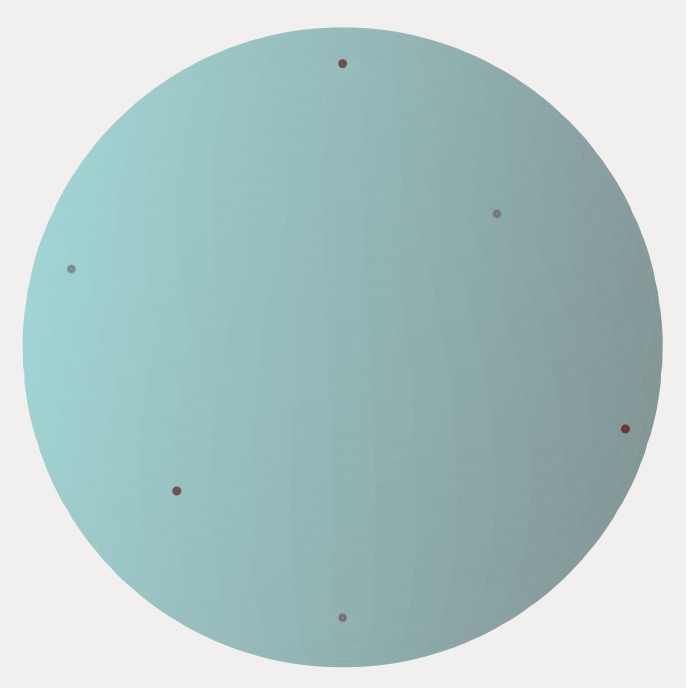} \includegraphics[scale=.25]{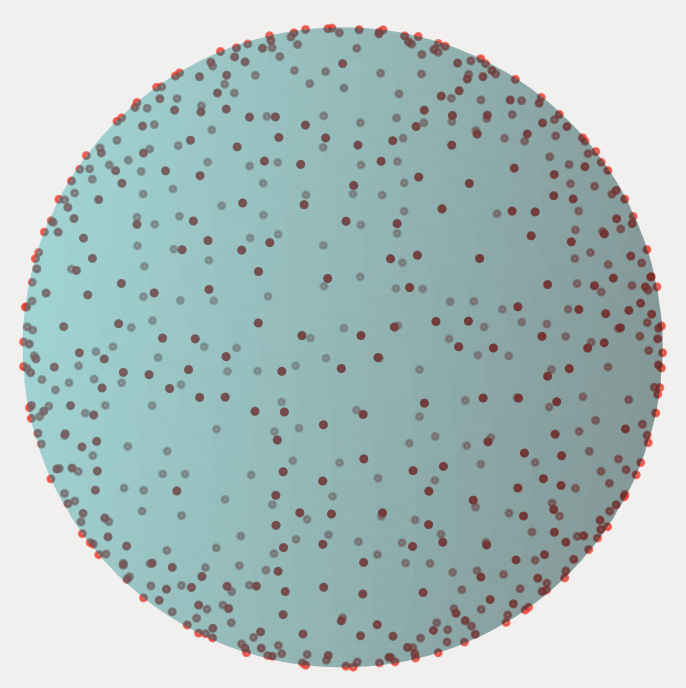} \includegraphics[scale=.25]{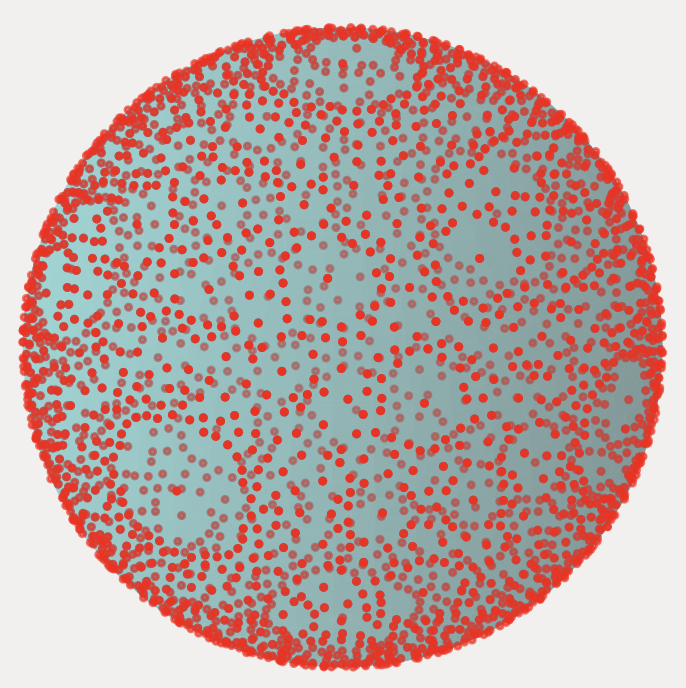} \includegraphics[scale=.25]{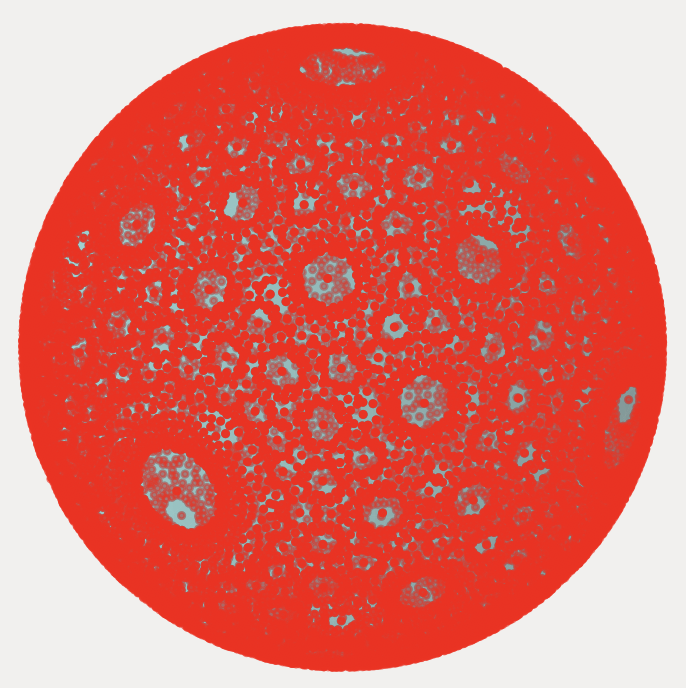}
  \caption{$\Omega_T\subset S^2$ for $T=1,20,40,100$.}\label{fig:Omega_T}
  \end{figure}
  
The proof of \cref{thm:Diophantine} shows that the lower bound $K((\Omega _T))\geq 2$ reflects the fact that rational points of small heights are badly approximable by other rational points; this leads to the `holes' we see emerging in \cref{fig:Omega_T}. In contrast, the second part of the statement shows that these holes, though responsible for the worst-case covering radius, occupy an asymptotically negligible proportion of the sphere. 

A similar phenomenon is observed when restricting to the sequence $(\Omega_n)_{n}$ of rational points of individual heights. Namely, it follows from \cref{thm:small scale} that
\begin{align*}
  2\left(1-\frac{1}{d}\right) \leq K((\Omega_n)) \leq 2+\frac{1}{d},
\end{align*}
where the lower bound follows from the simple observation that any $x\in \Omega_n$ (with $n>1$) is at distance  $\gg n^{-1/2}$  from the north pole. Hence in large dimensions, the covering exponent tends to 2, and in fact striking recent applications of the Hardy--Littlewood circle method imply that $K=2(1-\tfrac{1}{d})$ for $d\geq 4$ \cite{Sardari2019} and $K=\tfrac{4}{3}$ when $d=3$, conditionally on a twisted variant of Linnik's conjecture for Kloosterman sums \cite{browning2019twisted}.  

The case of the 2-dimensional sphere stands out, as only in this case is an optimal covering of the sphere by $(\Omega _n)$ in principle achievable. That $1\leq K\leq 2$ while $K_\mu=1$ --- as implied by \cref{coro:a.e. optimal} --- was anticipated by Sarnak \cite{sarnak2015letter}, based on the same ideas we exploit here. It would be remarkable if the covering exponent is optimal in dimension 2.  The arithmetic repulsion near low-height  rational points produces empty caps of radius $\asymp n^{-1/2}$, which is precisely the scale allowed by optimal covering on $S^2$.   In this sense, these holes are compatible with the optimal $K((\Omega_n))=1$.  We come back to this point in connection with Linnik's conjecture on Diophantine equations at the end of this introduction.


We emphasize that the measure of small-scale equidistribution is more rigid than that of the covering radius, which only asks for a lower bound on $R$ for $|\Omega_n \cap C_R|>0$, whereas small-scale equidistribution demands that $|\Omega_n\cap C_R|\to\infty$ as $n\to\infty$. Small-scale equidistribution allows in particular to quantify the abundance of rational approximants in intrinsic Diophantine approximation \cite{KelmerYu2023}. The analogue of Dirichlet's theorem states that for some constant $c>0$, $\psi(n):=c n^{-1}$, and any irrational $\alpha$ there exist infinitely many $n$ such that $|\Omega_n\cap C_{\psi(n)}(\alpha )|>0$ \cite{KleinbockMerrill2015}, and a quantitative refinement (analogue of a theorem of Schmidt) asserts that for almost every $\alpha$ we have on average $|\Omega_n \cap C_{c/n}(\alpha )| \asymp \tfrac{1}{n}$ \cite{AlamGhosh2022}. In contrast, \cref{coro:a.e. optimal} underlines the abundance of rational approximants in caps with a radius greater than $R\gg n^{-1/2+\eps}$, for every fixed $\eps>0$.

\begin{coro}\label{coro:quantitative Diophantine approximation}
Fix $\tau<\tfrac12$, $c>0$, and set $\psi(n)=c n^{-\tau}$. For every odd $n$, outside an exceptional set $E_n\subset S^2$ with $\mu(E_n)=o(n^{-\delta})$ (for any $\delta<1/2-\tau$), we have uniformly for $\alpha\in S^2\setminus E_n$,
$$
|\Omega _n \cap C_{\psi(n)}(\alpha )| = \frac{3c^2}{2} n^{1-2\tau} \prod_{p\mid n}(1-\chi(p)p^{-1})(1+o(1)),
$$
where $\chi$ is the primitive character (mod 4).
\end{coro}
\begin{proof}
    The proof combines \cref{coro:a.e. optimal} with $\mu(C_{\psi(n)}) = \tfrac{c^2 n^{-2\tau}}{4} +O(n^{-4\tau})$ (\cref{lm:area cap}) and $|\Omega_n|=6n\prod_{p\mid n}(1-\chi(p)p^{-1})$ for $n$ odd (\cref{sec:classical stuff}).
\end{proof}

\subsection{Linnik's conjecture on sums of squares and a mini-square}

\cref{thm:dim 2} also yields some progress towards the following conjecture of Linnik: for $n\not\equiv 0,4,7$ (mod 8), the Diophantine equation $x^2 +y^2+z^2=n$ is solvable in the integers with $|z|=O(n^\eps)$.

Note that the conjecture is easily proved for sums of four or more squares by the classical sum-of-squares theorems of Lagrange and Legendre. The condition $n\not\equiv0,4,7$ (mod 8) is implied by Legendre's theorem: a number $n$ is representable as a sum of three squares if and only if $n\neq 4^a(8b+7)$ for $a,b\geq0$. Linnik's conjecture is known to be true for almost all admissible $n$ by work of Wooley \cite{wooley2014linnik}, while Golubeva and Fomenko showed that it is true for all admissible $n$ with $|z|=O(n^{1/2-7/705+\epsilon })$ \cite{golubeva1994conjecture}. Stronger results have been established for specific subsets of $n\not\equiv0,4,7$ (mod 8): Humphries and Radziwi\l \l\, recently showed that for all squarefree $n\equiv 3$ (mod 8) there is a solution with $|z|=O(n^{4/9+\epsilon })$ or $|z|=O(n^{1/3+\epsilon })$ if we admit the General Lindel\"of Hypothesis \cite{HumphriesRadziwill2022}, while Golubeva--Fomenko obtained $|z|=O(n^{2/5+\epsilon })$ for all $n=d\ell^2\equiv1,3,5$ (mod 8) ($d\geq1$). For perfect squares $n=\ell^2$  the conjecture is trivially satisfied by $(x,y,z)=(\ell,0,0)$. However, if we restrict Linnik's question to primitive solutions, the equation is not solvable whenever $\ell$ is even.  A modification of the proof of \cref{thm:dim 2} (replacing caps by annuli) leads to the following result.
\begin{thm}\label{thm:Linnik}
For every odd perfect square $n$, the Diophantine equation $x^2+y^2+z^2=n$ has a {\em primitive} solution in the integers with $|z|=O(n^{1/3+\eps})$.
\end{thm}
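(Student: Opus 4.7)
The strategy is to reduce Linnik's equation to small-scale equidistribution in equatorial annuli and then run the argument of \cref{thm:dim 2} with caps replaced by annuli. Writing $n=\ell^2$ with $\ell$ odd, a primitive integer solution $(x,y,z)$ of $x^2+y^2+z^2=n$ with $|z|\le Z$ corresponds, upon dividing by $\ell$, to a rational point $\xi=(x,y,z)/\ell$ of height exactly $\ell$ lying in the equatorial annulus $A_R:=\{\xi\in S^2:|\xi_3|\le R\}$ with $R=Z/\ell$, and conversely; the primitivity $\gcd(x,y,z)=1$ is equivalent to $\xi\in\Omega_\ell$ since $|x|^2=\ell^2$ forces $\gcd(x,y,z)\mid\ell$. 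The theorem therefore reduces to showing $\Omega_\ell\cap A_R\neq\emptyset$ for $R=\ell^{-1/3+\eps}$.

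Expanding a smooth approximation of $\mathbf{1}_{A_R}$ in spherical harmonics, the rotational invariance of $A_R$ about the $z$-axis forces only the zonal harmonics $Y_j^0(\xi)\propto P_j(\xi_3)$ to appear. Using Bonnet's formula for $\int_{-R}^R P_j(t)\,dt$ together with the standard Laplace-type asymptotics for $P_j$ away from $\pm 1$, the Fourier coefficients satisfy $a_j=0$ for odd $j$ and
\[|a_j|\ll\min(R,\,j^{-1})\qquad(j\ge 2\text{ even}),\]
so that truncating the expansion at $j\le J\asymp R^{-1}$ introduces only an admissible smoothing error.

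The zonal Weyl sums $W_j(\ell):=\sum_{\xi\in\Omega_\ell}Y_j^0(\xi)$ are controlled by the same Hecke/Shimura machinery that drives \cref{thm:dim 2}: after primitivity sieving on the integer vectors with $|x|^2=\ell^2$, one identifies $W_j(\ell)$ with a Fourier coefficient of a half-integral weight $\tfrac{3}{2}+j$ theta series attached to the harmonic polynomial $x\mapsto|x|^jP_j(x_3/|x|)$; Shimura-lifting to a weight $2j+2$ cusp form on $\Gamma_0(4)$ and applying the Deligne bound, together with Kohnen--Zagier/Waldspurger, yields $|W_j(\ell)|\ll\ell^{1/2+\eps}j^{1/2+\eps}$. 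Combining this with the bound on $|a_j|$ and truncating at $j\le J=R^{-1}$,
\[\sum_{1\le j\le J}|a_j|\,|W_j(\ell)|\ll\ell^{1/2+\eps}R^{-1/2-\eps},\]
to be compared with the main term $|\Omega_\ell|\,\mu(A_R)\asymp\ell R$. The two balance precisely at $R\asymp\ell^{-1/3+\eps}$; above this threshold the main term dominates, which gives $|\Omega_\ell\cap A_R|>0$ and hence a primitive solution with $|z|\le\ell R=O(\ell^{2/3+\eps})=O(n^{1/3+\eps})$.

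The main obstacle is the absence of $m$-averaging in the annulus setting: in \cref{thm:dim 2} the stronger $n^{-1/2+o(1)}$ threshold for almost every cap arose from Cauchy--Schwarz in the index $m$, but a rotationally invariant annulus picks up only $m=0$, leaving one with the pointwise Hecke bound. The $j^{1/2}$ factor (reflecting the $L^\infty$ norm of the zonal harmonic) is then unavoidable and, combined with the slow $1/j$ decay of the annular Fourier coefficients, is exactly what degrades the exponent from $1/4$ (caps) to $1/3$ (annuli). The remaining technicalities are the smoothing of $\mathbf{1}_{A_R}$ on a scale $\eta R$ with $\eta\to 0$ slowly, and the primitivity sieve relating $\Omega_\ell$ to the full set of integer vectors with $|x|^2=\ell^2$.
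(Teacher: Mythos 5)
Your approach is essentially the same as the paper's: replace the cap by a zonal annulus about the north pole, expand spectrally, bound the Weyl sums via the quaternionic Hecke/Shimura machinery of Proposition~3.5, and balance against the main term $|\Omega_\ell|\mu(A_R)\asymp\ell R$ to land on $R\gg\ell^{-1/3+\eps}$ and hence $|z|=O(n^{1/3+\eps})$. The paper's proof is terse (``follows verbatim with caps replaced by annuli'', regularize via the sandwich inequality, then Cauchy--Schwarz over $\nu$ and Parseval), whereas you work out the Legendre coefficients $|a_j|\ll\min(R,j^{-1})$ explicitly via Bonnet and Laplace asymptotics and truncate at $J\asymp R^{-1}$; these are cosmetically different regularizations of the same computation and yield the identical exponent.

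One step in your writeup needs to be made precise, because as phrased it is not quite correct but happens to give the right answer. You assert that the zonal Weyl sum $W_j(\ell)$ is ``a Fourier coefficient of a half-integral weight theta series attached to $x\mapsto|x|^jP_j(x_3/|x|)$'' and that Shimura lifting plus Deligne gives $|W_j(\ell)|\ll j^{1/2+\eps}\ell^{1/2+\eps}$, attributing the $j^{1/2}$ to $\|Y_j^0\|_\infty$. But the zonal polynomial $Y_j^0$ is \emph{not} an eigenfunction of the quaternionic Hecke operators $\tilde T_p$ — conjugation by norm-$p$ quaternions does not preserve rotational symmetry about the $z$-axis — so $\Theta_{\Lambda,Y_j^0}$ is not a Hecke eigenform and Deligne's bound cannot be applied to it directly. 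Feeding $Y_j^0$ into the eigenform decomposition of Proposition~3.4 (the general-$d$ bound, with its Rankin--Selberg control on the Petersson norms) would cost an additional $\nu^{1/2+\eps}$, giving $|W_j(\ell)|\ll j^{1+\eps}\ell^{1/2+\eps}$, which only delivers $R\gg\ell^{-1/4}$ and $|z|=O(n^{3/8+\eps})$. The reason your claimed $j^{1/2}$ is nevertheless correct is more delicate: expanding $Y_j^0=(2j+1)^{-1/2}\sum_{P\in\cB_j}P(e_3)P$ in a quaternionic Hecke eigenbasis and applying Proposition~3.5 to each $P$ individually, the bound on $\sum_\xi P(\xi)$ is controlled not by $\|P\|_\infty$ alone but by $|r_P(1)|\ll|P(e_1)|+|P(e_2)|+|P(e_3)|$, so that $\sum_P|P(e_3)|\,|r_P(1)|$ is bounded by the pre-trace formula $\sum_PP(e_i)^2=2j+1$, giving $|W_j(\ell)|\ll(2j+1)^{1/2}\ell^{1/2+o(1)}$. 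This is precisely the mechanism the paper's Parseval/Cauchy--Schwarz over $\nu$ exploits without ever isolating a single $W_j$. You should either spell out this pre-trace argument or use the paper's convolution-and-Parseval route; ``Kohnen--Zagier/Waldspurger'' is also a misattribution (Waldspurger concerns squarefree coefficients and central $L$-values; the relevant input here is the Shimura/Kohnen lift at square indices, Deligne, and M\"obius inversion, and the lift is at level~1, not $\Gamma_0(4)$). Finally, the truncation at $J\asymp R^{-1}$ is not merely a technicality: the unsmoothed tail $\sum_{j>J}|a_j||W_j(\ell)|\asymp\ell^{1/2}\sum_{j>J}j^{-1/2}$ diverges, so the mollification you defer to the end is essential and should be carried out explicitly, e.g.\ via the paper's sandwich inequality~\eqref{HR sandwich}.
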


`Trivial' solutions are provided by primitive Pythagorean triples; whenever $\ell$ is the hypotenuse of a right triangle with integer side-lengths --- equivalently, whenever $\ell$ is an odd number with no prime factor $\equiv 3$ (mod 4) --- we obtain a primitive solution with $z=0$. However the set of numbers with no prime factor $\equiv 3$ (mod 4) has density zero. In view of \cref{thm:Linnik}, a bound of $|z|=O(n^{1/4+\eps})$ would follow if small-scale equidistribution for caps of radius $R\asymp \ell^{-1/2+\eps}$ (which \cref{coro:a.e. optimal} establishes to hold for almost every cap on the sphere) were true for spherical caps about the north pole (or any other pole). Since the largest 'holes' in the distribution of rational points on the sphere occur precisely near the coordinate poles, these locations remain the natural obstruction in approaching Linnik's conjectural bound. 

\begin{figure}[ht]
  \centering
  \includegraphics[scale=.35]{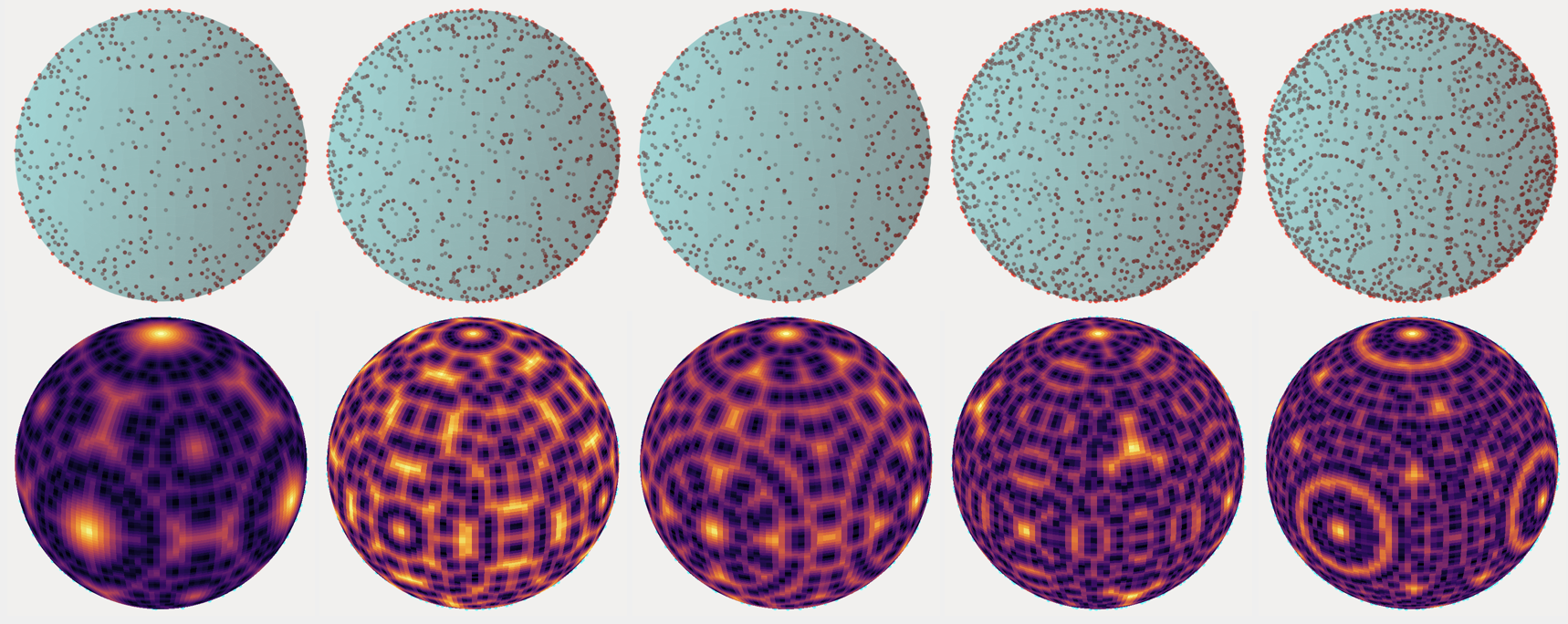}
  \caption{$\Omega_n\subset S^2$ for $n=89,93,101,161,171$, together with heatmaps of the distance function to $\Omega_n$. Brighter regions indicate points on $S^2$ that are farther from $\Omega_n$.}
  \label{fig:Omega_n holes}
\end{figure}


In parallel to \cref{thm:BRS all levels}, we also upgrade the conditional result of Humphries and Radziwi\l\l\, to all admissible levels.

\begin{thm}\label{thm:conditional Linnik}
Assume the Lindel\"of Hypothesis for standard $\GL(2)/\Q$ $L$-functions. Let $d\not\equiv 7$ (mod 8)
be a positive squarefree integer, let $\ell$ be odd, and set $n=d\ell^2$. Then the Diophantine equation $x^2+y^2+z^2=n$ has a primitive solution in the integers with $|z|=O(n^{1/3+\eps})$.
\end{thm}

\section{Spherical Caps and Zonal Harmonics}

The surface area of the $d$-dimensional unit sphere $S^d$ is $$\omega_d = \frac{2 \pi^{\frac{d+1}{2}}}{\Gamma\left(\frac{d+1}{2}\right)}.$$
We denote by $\mu$ the normalized rotation-invariant measure on the sphere.

\subsection{Spherical caps}

Fix a spherical cap $C_R(\alpha )$, centered at $\alpha\in S^d$ and having (Euclidean) radius $R$, i.e., $C_R(\alpha):=\{x\in S^d: |x-\alpha|<R\}$.

\begin{lm}\label{lm:area cap}
For any $\alpha\in S^d$ and $R\ll 1$, the normalized surface area of the spherical cap $C_R(\alpha)$ is given by
\begin{align}\label{eq:measure spherical cap}
\mu(C_R) = \frac{\omega_{d-1}}{\omega_d} \frac{R^d}{d} + O_d(R^{d+2}).
\end{align}
\end{lm}

\begin{proof}
    Choosing a local system of spherical coordinates about $\alpha$ on $S^d$,  let $\theta =\theta(x)\in[0,\pi )$ be the angle subtended at the origin of the vectors $x$ and $\alpha$ so that  $\langle x,\alpha \rangle = \cos \theta$.  We have $x\in C_R(\alpha )$ if and only if $\cos \theta \in(1-\tfrac{R^2}{2},1]$. By the half-angle formula, this is equivalent to $\theta\in[0,r)$ for $r:=2\arcsin(R/2)$. Taylor expansion about $R=0$ yields $r=R+O(R^3)$. Then
    \begin{align*}
\mu(C_R) &= \frac{\omega _{d-1}}{\omega _d} \int_0^r \sin(\theta )^{d-1}\, d \theta =\frac{\omega _{d-1}}{\omega _d} \frac{r^d}{d} + O(r^{d+2}). 
\end{align*}
\end{proof}
We will later work with the normalized characteristic function given by 
\begin{align}\label{Choose f}
  f_{R,\ga}(x) \coloneqq \begin{dcases}
    \frac{1}{\mu(C_R)}&\text{ if } x\in C_R(\alpha ) ;\\
    0 &\text{ otherwise}.
  \end{dcases}
\end{align}
This function is zonal, meaning that it depends only on $\scal{x,\alpha }$. As such it admits the (formal) spectral expansion
\begin{align} \label{spectral_expansion_general}
  f_{R,\alpha } (x) = \sum_{\nu\geq0} \hat f_{R,\alpha }(\nu) Z_\nu(x,\alpha),
\end{align}
in terms of the Gegenbauer coefficients $\hat f_{R,\alpha }(\nu)$ and the zonal harmonics $Z_\nu(x,\alpha)$ with pole $\alpha$. We review properties and estimates of these functions in the remainder of this section.
\subsection{Zonal harmonics}\label{sec:zonal harmonics}

Let $d\geq2$. The space $L^2(S^d)$, equipped with the standard inner product for the normalized rotation-invariant measure $\mu$  on the sphere, admits the direct sum decomposition
$$
L^2(S^{d}) = \bigoplus_{\nu\geq0} \cH_\nu,
$$
where $\cH_\nu$ is the eigenspace of the spherical Laplacian corresponding to the eigenvalue $-\nu(\nu+d-1)$. Each space $\cH_\nu$ is finite-dimensional, with
\begin{align*}
  {\rm dim}(\cH_\nu) = \binom{d+\nu }{\nu } -\binom{d+\nu -2}{\nu -2} \asymp \nu ^{d-1}
\end{align*}
if $\nu \geq2$, $\dim \cH_1 =d+1$, $\dim \cH_0=1$, and consists of projections to $S^d$ of real homogeneous harmonic polynomials of degree $\nu $ in $d+1$ variables. We will denote by $P$ both the polynomial and its projection to $\cH_\nu$, where the distinction should be clear from context. For each $x\in S^d$, there exists a unique real-valued function $Z_\nu(x,\cdot)$ such that 
\begin{align*}
 P(x) = \int_{S^d} P(y) Z_\nu(x,y) \, d \mu (y) 
\end{align*}
for each $P\in \cH_\nu$; the function $Z_\nu (x,y)$ is called the zonal harmonic of degree $\nu$ with pole $x$.  For any orthonormal basis $\cB_\nu\subset \cH_\nu$ of real-valued spherical harmonics, one has   
\begin{align}\label{eq:pre-trace formula}
  Z_\nu (x,y) = \sum_{P\in \cB_\nu } P(x)P(y),
\end{align}
and  this expression  does not depend on the particular choice of basis. In particular we take note of the identities
\begin{align*}
  &\int_{S^d} Z_\nu (x,z) Z_\nu (y,z) d \mu (z) =
  Z_\nu (x,y); \\
  &\int_{S^d} Z_\nu(x,x)\, d \mu (x) = 
  \dim \cH_\nu.
\end{align*} 

\subsection{Quaternionic Hecke operators on $S^2$}

The two-dimensional sphere is special in that it carries an additional
arithmetic symmetry coming from the Hamilton quaternion algebra.  This symmetry gives rise to a commutative algebra of Hecke operators on $L^2(S^2)$ which commute with the spherical Laplacian. We now recall the construction.

Let $B$ denote the Hamilton quaternion algebra over $\Q$, generated by the basis $\{1,i,j,k\}$ such that $i^2 = j^2 =-1$ and $ij=-ji =k$. The conjugate of a quaternion $x=a+bi+cj+dk$ is $\overline{x}=a-bi-cj-dk$. We define the reduced trace $\mathrm{tr}(\alpha) : B \rightarrow \Q$ by $\mathrm{tr}(x) = x + \overline{x}$ and the reduced norm $\mathrm{nr}(\alpha) : B \rightarrow \Q $ by $\mathrm{nr}(x) = x \overline{x}$. The Hurwitz order $\mathcal{O} \subset B$ is given by
$$
\mathcal{O} = \{(a + bi + cj + dk)/2 : (a,b,c,d) \in \Z^4 \text{ with the same parity} \}.
$$
It has class number 1, and its unit group has order $24$. Let $B_\infty=B\otimes_\Q \R$ be the classical Hamilton quaternions.  We write $B_\infty^\times = \{x \in B_\infty: \mathrm{nr}(x) \neq 0 \}$ for its group of invertible elements. The 2-sphere $S^2$ can be identified with the set of pure unit quaternions 
$$
\{ z \in  B_\infty : \mathrm{tr}(z)=0,\, \mathrm{nr}(z) = 1 \} \subset B_\infty^\times.
$$
There is a natural action of $B_\infty^\times$ on $S^2$ given by conjugation, i.e., $x\mapsto zxz^{-1}=(z x\overline{z})/\mathrm{nr}(z)$, which describes rotations. For $p \neq 2$ prime we have the Hecke operator $\tilde{T}_p$ on $L^2(S^2)$ given by 
\begin{align} \label{Heckeoperators:Brandt_matrices}
    (\tilde{T}_p f)(x) &= \frac{1}{|\mathcal{O}^\times|} \sum_{\substack{z \in \mathcal{O} \\ \mathrm{nr}(z) = p }} f \left( \frac{z x \overline{z} }{\mathrm{nr}(z)} \right).
\end{align}
For $p=2$ one can also define an involution operator $\tilde{W}_2$, see \cite{bocherer1994mellin}. The above operators commute, and also commute with the spherical Laplacian, so there exists a basis $\mathcal{B}_\nu$ for $\mathcal{H}_\nu(S^2)$ consisting of simultaneous eigenfunctions.

\subsection{Gegenbauer polynomials}
Zonal harmonics are the spherical harmonic analysis analogues of radial functions in Euclidean harmonic analysis. They have a particularly simple expression in terms of the Gegenbauer polynomials $C_\nu^\gl$, which can be defined via the generating function
\begin{align*}
  (1-2rt+r^2)^{-\lambda } = \sum_{\nu=0}^\infty C_\nu^\gl(t) r^\nu,
\end{align*}
where $|r|<1$, $|t|\leq 1$, and $\lambda>0$.  We fix $\lambda \coloneqq \tfrac{d-1}{2}$,  $C_\nu \coloneqq C_\nu ^\lambda $, and $c_\nu=\frac{\nu+\lambda}{\lambda}$; then 
\begin{align*}
  Z_{\nu }(x,y) =c_{\nu}\cdot C_\nu(\scal{x,y}).
\end{align*}
The Gegenbauer polynomials $C_\nu$ form an orthogonal basis for the space $\cH$ of $L^2$-functions on $[-1,1]$ with respect to the measure $(1-t^2)^{d/2-1}dt$. An $L^2$-function $f$ on the sphere $S^d$ that is zonal, i.e., that only depends on $\scal{x,\ga}$ for some fixed $\ga \in S^d$, may be seen as an element of $\cH$. Then the Funk--Hecke formula \cite[Theorem 1.2.9]{DaiXu2013} implies that $f$ admits the spectral expansion
\begin{align*} 
  f(x) =  \sum_{\nu\geq0} \hat f(\nu ) Z_\nu (x,\alpha )
\end{align*}
(with convergence in $L^2(S^d)$), where $\hat f(\nu)$ are the Gegenbauer coefficients
\begin{align} \label{eq:Gegenbauercoeff}
  \widehat{f}(\nu ) = \frac{\omega_{d-1}}{\omega_d} \frac{c_{\nu}}{\dim(\cH_\nu )}\int_{-1}^1 f(t) C_\nu(t) (1-t^2)^{d/2-1}dt.
\end{align}

Recall the characteristic function $f_{R,\alpha }$ defined in \cref{Choose f}. We have the following result.
  \begin{prop}\label{prop:Bessel coeffs}
We have $\hat{f}_{R,\ga}(0)=1$ and for $\nu>0$ and $R\ll 1$
    \begin{align*}
      |\hat f_{R,\ga}(\nu )| \ll_d \begin{dcases}
        1 &\text{ if } r\leq \tau ;\\
        (\nu R)^{-(d+1)/2} &\text{ if } r> \tau;
      \end{dcases}
    \end{align*}
where $\tau = \frac{c_d}{\nu + c_d'}$ with $c_d, c_d'$ two constants that depend on $d$ only, and $r=2\arcsin(R/2)$.
  \end{prop}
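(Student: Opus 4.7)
The plan is to reduce the Gegenbauer coefficient to a one-dimensional integral of $C_\nu = C_\nu^\lambda$, then to estimate that integral in two regimes: a small-scale regime where a trivial pointwise bound suffices, and a large-scale regime where one integrates by parts using the Sturm--Liouville form of the Gegenbauer ODE and applies the classical oscillatory asymptotic for Gegenbauer polynomials.

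First, performing the substitution $t = \langle x, \alpha\rangle = \cos\theta$ in \cref{eq:Gegenbauercoeff} for $f=f_{R,\alpha}$, and using the identity $c_\nu C_\nu^\lambda(1) = \dim\cH_\nu$ (obtained by evaluating $Z_\nu(x,x) = \sum_{P\in\cB_\nu}P(x)^2$ in two ways and integrating over $S^d$), one can rewrite
\begin{align*}
\hat f_{R,\alpha}(\nu) = \frac{\omega_{d-1}}{\omega_d}\,\frac{1}{C_\nu^\lambda(1)\,\mu(C_R)} \int_{\cos r}^1 C_\nu^\lambda(t)(1-t^2)^{\lambda-1/2}\,dt.
\end{align*}
The case $\nu=0$ is then immediate, since $C_0^\lambda \equiv 1$ and the remaining integral equals $\int_0^r \sin^{d-1}\theta\,d\theta = (\omega_d/\omega_{d-1})\mu(C_R)$. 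For $\nu>0$ and $r\leq \tau$, the pointwise bound $|C_\nu^\lambda(t)|\leq C_\nu^\lambda(1)$ for $t\in[-1,1]$ cancels the $C_\nu^\lambda(1)$ factor in the denominator, and the remaining integral is again exactly $(\omega_d/\omega_{d-1})\mu(C_R)$, yielding $|\hat f_{R,\alpha}(\nu)| \leq 1$.

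For $r > \tau$, the Sturm--Liouville form of the Gegenbauer ODE combined with $(C_\nu^\lambda)' = 2\lambda\,C_{\nu-1}^{\lambda+1}$ furnishes the exact identity
\begin{align*}
(1-x^2)^{\lambda - 1/2}\, C_\nu^\lambda(x) = -\frac{2\lambda}{\nu(\nu+2\lambda)}\,\frac{d}{dx}\bigl[(1-x^2)^{\lambda+1/2}\,C_{\nu-1}^{\lambda+1}(x)\bigr].
\end{align*}
Since $\lambda+1/2 = d/2 \geq 1$, the boundary term at $x=1$ vanishes, and integrating from $\cos r$ to $1$ gives
\begin{align*}
\int_{\cos r}^1 C_\nu^\lambda(t)(1-t^2)^{\lambda-1/2}\,dt = \frac{2\lambda\,(\sin r)^{2\lambda+1}}{\nu(\nu+2\lambda)}\,C_{\nu-1}^{\lambda+1}(\cos r).
\end{align*}
I then apply the classical amplitude bound from Darboux's asymptotic, $|C_m^\mu(\cos\theta)| \ll_\mu m^{\mu-1}(\sin\theta)^{-\mu}$ valid for $\theta \gtrsim 1/m$, with $\mu = \lambda + 1 = (d+1)/2$ and $m = \nu - 1$. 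Combined with $C_\nu^\lambda(1) \asymp_d \nu^{d-2}$, $\mu(C_R) \asymp_d r^d$, and $r\asymp R$, the powers of $\nu$ and $r$ collapse to the target $(\nu R)^{-(d+1)/2}$.

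The main obstacle is purely bookkeeping: identifying the correct Sturm--Liouville identity and tracking several cancellations among the prefactor $1/(C_\nu^\lambda(1)\,\mu(C_R))$, the boundary factor $(\sin r)^{2\lambda+1}$, and the amplitude bound on $C_{\nu-1}^{\lambda+1}$. The threshold $\tau = c_d/(\nu + c_d')$ in the statement is dictated precisely by the domain of validity of the Gegenbauer asymptotic; on the boundary $r\asymp 1/\nu$ the two bounds $1$ and $(\nu R)^{-(d+1)/2}$ agree up to a dimensional constant, so the two regimes join consistently.
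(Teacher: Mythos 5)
Your proposal is correct, and it takes a genuinely different route from the paper's proof. After the same reduction to
\begin{align*}
\hat f_{R,\alpha}(\nu) = \frac{\omega_{d-1}}{\omega_d}\frac{1}{C_\nu^\lambda(1)\,\mu(C_R)} \int_{\cos r}^1 C_\nu^\lambda(t)(1-t^2)^{\lambda-1/2}\,dt,
\end{align*}
the paper substitutes the Hilb asymptotic formula (Szeg\H{o}, Thm.~8.21.12) for $C_\nu(\cos\theta)$, converting the integral to a truncated $J$-Bessel integral (plus error terms) and then invoking Bessel asymptotics in two regimes. You instead recognize the integrand as an exact derivative via the Sturm--Liouville differential--difference identity
\begin{align*}
(1-x^2)^{\lambda - 1/2}\, C_\nu^\lambda(x) = -\frac{2\lambda}{\nu(\nu+2\lambda)}\,\frac{d}{dx}\Bigl[(1-x^2)^{\lambda+1/2}\,C_{\nu-1}^{\lambda+1}(x)\Bigr],
\end{align*}
which evaluates the integral to a single boundary term $\frac{2\lambda}{\nu(\nu+2\lambda)}(\sin r)^{2\lambda+1}C_{\nu-1}^{\lambda+1}(\cos r)$ with no truncated integral left over and no Hilb error to track. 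Applying the Darboux/Szeg\H{o} amplitude bound $|C_m^\mu(\cos\theta)|\ll_\mu m^{\mu-1}(\sin\theta)^{-\mu}$ (valid for $\theta\gtrsim 1/m$), and the trivial $|C_\nu^\lambda(t)|\leq C_\nu^\lambda(1)$ for the small-angle range, the powers of $\nu$ and $R$ indeed collapse to $(\nu R)^{-(d+1)/2}$; I verified the bookkeeping including the normalization $c_\nu C_\nu^\lambda(1) = \dim\cH_\nu$ and the sizes $C_\nu^\lambda(1)\asymp\nu^{d-2}$, $\mu(C_R)\asymp R^d$. Your argument is somewhat leaner --- the only asymptotic input is the single Gegenbauer amplitude bound, applied once at the endpoint --- while the paper's Hilb-to-Bessel route is the more standard template for spectral localization on spheres and more transparently exhibits the Bessel oscillation underlying the exponent $(d+1)/2$; either way the crossover at $r\asymp1/\nu$ matches, as you note. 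One small caveat worth recording explicitly: for $\nu=1$ the Darboux bound with $m=\nu-1=0$ degenerates, but $C_0^{\lambda+1}\equiv1$ is trivially bounded, so this edge case is harmless.
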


   \begin{proof}
The first statement follows from a simple change of variables
\begin{align*}
  \hat f_{R,\alpha }(0) = \frac{\omega _{d-1}}{\omega _d} \int_{0}^\pi  f_{R,\alpha }(\cos \theta ) (\sin \theta )^{d-1} d \theta =1.
\end{align*}
For $\nu >0$, the $\nu$-th Gegenbauer coefficient of $f_{R,\alpha }$ is given by
\begin{align*}
\hat f_{R,\ga}(\nu ) &= \frac{\xi_d}{\mu(C_R)} \frac{c_{\nu}}{\dim \cH_\nu } \int_0^r C_\nu (
  \cos \theta)(\sin \theta )^{d-1}\, d \theta
\end{align*}
for some normalizing constant $\xi_d$; the constant $\xi_d$ will come to change in the course of the proof, the only important point is that it depends on $d$ only.  Let $\lambda=\tfrac{d-1}{2}$ as earlier.  The extension of the Hilb formula to Jacobi polynomials (see \cite[Thm 8.21.12]{Szego1939}  specialized to $\alpha =\beta =\tfrac{d}{2}-1$ and $P_\nu (t)=\frac{\Gamma (\nu +\lambda +1/2)}{\Gamma (\nu +2 \lambda )}C_\nu (t)$) states that
\begin{align*}
  C_\nu (\cos \theta ) = \xi_d \frac{(\nu +\lambda )^{1-d/2}\Gamma (\nu +2\lambda )\theta ^{1/2}}{\nu ! (\sin \theta )^{(d-1)/2}} J_{d/2-1}((\nu +\lambda )\theta  ) + E(\nu,\theta )
\end{align*}
where $J$ is the standard $J$-Bessel function and the error term is
\begin{align*}
  E(\nu ,\theta )\ll \begin{dcases}
   \theta^{1/2} \nu^{-3/2}, &\text{ if } \theta \geq \tfrac{c}{\nu } ;\\
    \theta^{d/2+1}\nu^{d/2-1}, &\text{ if } \theta <\tfrac{c}{\nu },
   \end{dcases}
  \end{align*}  
for some fixed constant $c$.  The main term contributes
\begin{align*}
  \int_0^r \theta^{d/2} J_{d/2-1}((\nu +\lambda) \theta ) d \theta  = (\nu+\lambda )^{-1-d/2} \int_0^{(\nu+\lambda ) r} \theta^{d/2} J_{d/2-1}(\theta ) d \theta  .
\end{align*}
To go further, we require some asymptotic bounds for the $J$-Bessel function. If $\theta >c_d$, then \cite[Eq.~(8.451.1)]{GradshetynRyzhik1996} provides the asymptotic bound
         \begin{align*}
       J_{d/2-1}(\theta ) = 
       \sqrt{\frac{2}{\pi \theta }} \left(\cos(\theta -\tfrac{\pi}{4}(d-1))+O\left(\theta  ^{-1}\right) \right) 
     \end{align*}
for some constant depending on $d$ only. If $\theta  \leq c_d$, then we instead rely on the rough bound $|J_{d/2-1}(\theta )|=O_d(1)$, derived from \cite[Eq.~(8.440)]{GradshetynRyzhik1996}. Hence if $r> c_d/(\nu+\lambda)=\tau$, we have 
\begin{align*}
  \hat f_{R,\alpha }(\nu ) \ll (\nu R)^{-d} \frac{\nu^{2-d} (\nu +d-2)!}{\nu !}  ((\nu R)^{(d-1)/2}+1) \asymp (\nu R)^{-(d+1)/2},
\end{align*}
where we use that $r\asymp R$. The case $r\leq \tau$ is similar.
\end{proof}

\subsection{The 1-Wasserstein distance}

Let $d_{S^2}$ denote the geodesic distance on $S^2$. For two Borel
probability measures $\mu_1,\mu_2$ on $S^2$, their 1-Wasserstein
distance is
$$
W_1(\mu_1,\mu_2)
\coloneqq
\inf_{\pi\in\Pi(\mu_1,\mu_2)}
\int_{S^2\times S^2} d_{S^2}(x,y)\,d\pi(x,y),
$$
where $\Pi(\mu_1,\mu_2)$ denotes the set of couplings of $\mu_1$ and
$\mu_2$. Equivalently, by Kantorovich--Rubinstein duality \cite{Villani2003},
$$
W_1(\mu_1,\mu_2)
=
\sup_{{\rm Lip}(f)\leq 1}
\left|
\int_{S^2}f\,d\mu_1-\int_{S^2}f\,d\mu_2
\right|.
$$
Here ${\rm Lip}(f)$ is computed with respect to the spherical distance $d_{S^2}$.
Using the Euclidean distance instead changes $W_1$ only by an absolute multiplicative constant.

We will use the following elementary spectral smoothing inequality. It is a heat-kernel version of the Berry--Esseen inequalities for compact spaces; see also \cite{BordaCuenin2025,SanchezGarza2026} for related formulations on spheres and compact
homogeneous spaces.

\begin{prop}\label{prop:wasserstein-smoothing}
Let $\eta$ be a Borel probability measure on $S^2$. For each
$\nu\geq 0$, let $\mathcal B_\nu$ be an orthonormal basis of
$H_\nu(S^2)$, and write
$$
\widehat \eta(P):=\int_{S^2}P\,d\eta.
$$
Then, for $0<t\leq1$,
$$
W_1(\eta,\mu)
\ll
t^{1/2}
+
\left(
\sum_{\nu\geq 1}
\frac{e^{-2t\nu(\nu+1)}}{\nu(\nu+1)}
\sum_{P\in\mathcal B_\nu}
|\widehat\eta(P)|^2
\right)^{1/2}.
$$
\end{prop}

\begin{proof}
Let $\sigma=\eta-\mu$ and let \(P_t=e^{t\Delta}\) be the heat semigroup on $S^2$. We need to bound $\left| \int f d\sigma \right|$ uniformly over $1$-Lipschitz functions $f$. The standard heat-kernel estimate 
$$
\|f-P_tf\|_\infty\ll t^{1/2}
$$
gives 
$$
\left|\int_{S^2} f\,d\sigma\right|
\leq
\left|\int_{S^2} P_t f\,d\sigma\right|+O(t^{1/2}).
$$
Since $\sigma(S^2)=0$, the degree zero term does not contribute. Taking the spectral expansion of
$f$ and using that
$$
P_tP=e^{-t\nu(\nu+1)}P
\qquad (P\in H_\nu),
$$
we get
$$
\int_{S^2} P_tf\,d\sigma
=
\sum_{\nu\geq 1}
e^{-t\nu(\nu+1)}
\sum_{P\in\mathcal B_\nu}
\widehat f(P)\widehat\sigma(P).
$$
By Cauchy--Schwarz,
\[
\left|\int P_tf\,d\sigma\right|
\leq
\left(
\sum_{\nu\geq 1}\nu(\nu+1)
\sum_{P\in\mathcal B_\nu}|\widehat f(P)|^2
\right)^{1/2}
\left(
\sum_{\nu\geq 1}
\frac{e^{-2t\nu(\nu+1)}}{\nu(\nu+1)}
\sum_{P\in\mathcal B_\nu}|\widehat\sigma(P)|^2
\right)^{1/2}.
\]
The first factor is \(\|\nabla f\|_2\leq \operatorname{Lip}(f)\leq 1\).
For \(\nu\geq 1\), we have \(\widehat\sigma(P)=\widehat\eta(P)\), since
\(\int_{S^2}P\,d\mu=0\). Taking the supremum over all \(1\)-Lipschitz
functions \(f\) gives the claim.
\end{proof}

\section{Fourier Coefficients of Theta Series}

\subsection{Classical material}\label{sec:classical stuff}
The set $\Omega_n$ of all rational points on the sphere $S^d$ of height $n$ is parametrized by
\begin{align*}
  \Omega_n = \left\{ n^{-1}(m_1,\dots,m_{d+1}) \in n^{-1}\Z^{d+1} \mid \begin{array}{c}  m_1^2+\dots + m_{d+1}^2=n^2\\  (m_1,\dots,m_{d+1},n)=1 \end{array}\right\}.
\end{align*}
It is closely connected to the sum-of-squares functions
\begin{align*}
  r_{d+1}(n) &= \left\{ (m_1,\dots,m_{d+1})\in \Z^{d+1}\mid  m_1^2+\dots + m_{d+1}^2=n\right\},\\
  r^*_{d+1}(n) &=  \left\{ (m_1,\dots,m_{d+1})\in \Z^{d+1}\mid \begin{array}{c}  m_1^2+\dots + m_{d+1}^2=n\\  (m_1,\dots,m_{d+1},n)=1 \end{array}\right\}.
\end{align*}
In fact we have 
\begin{align}\label{eq:Omega_n}
    |\Omega_n| = r^*_{d+1}(n^2) = \sum_{\delta \mid n} \mu (\delta ) r_{d+1}(n^2/\delta ^2),
\end{align}
where $\mu$ is the M\"obius function. For small dimensions we have access to exact formulas, e.g., 
 \begin{align*}
   r^*_3(n^2)= \begin{dcases} 6n\prod_{p\mid n} (1-\chi_{-4}(p)p^{-1}) &\text{if } n \text{ is odd};\\
     0 &\text{if } n \text{ is even}.
   \end{dcases}
 \end{align*}
For $d=3$, Jacobi's four-square formula and M\"obius inversion similarly give, for odd $n$, 
$$
r_4^*(n^2) = 8n^2 \prod_{\substack{p\mid n\\ p \text{ odd}}} (1+p^{-1}).
$$
More generally, we have $r^*_{d+1}(n^2) \asymp n^{d-1}$  for $n$ in a subset $\mathcal N_d \subset \N$ of positive density;   see  \cref{app:size of Omega_n}. The sum-of-squares functions appear as the Fourier coefficients of the standard theta-functions
\begin{align*}
  \Theta(\tau) = \sum_{m\in \Z^{d+1}}  q^{|m|^2} = \sum_{n\geq0} r_{d+1}(n)q^n
\end{align*}
for $\tau\in \h$ and $q=e^{2\pi i\tau}$, which is a modular form of weight $(d+1)/2$ and level 4.

\subsection{Modular forms input}
Let $P$ be a real homogeneous harmonic polynomial of degree $\nu>0$ in $d+1$ variables. Its associated theta function evaluated on the lattice $\Z^{d+1}$ is defined by 
\begin{align*}
  \Theta_P(\tau) = \sum_{m\in \Z^{d+1}} P(m) q^{|m|^2}
\end{align*}
for $\tau\in \h$ and $q=e^{2\pi i\tau}$. The theta series $\Theta_P$ is a cusp form of weight $\tfrac{d+1}{2}+\nu$ and level 4 \cite{Schoeneberg1939,Pfetzer1953,Shimura1973} and its  Fourier coefficients are
\begin{align*}
r_P(n) \coloneqq  \sum_{\substack{m\in \Z^{d+1}\\ |m|^2=n}} P(m).
\end{align*}
We may assume that $\nu$ is even since $r_P(n)=0$ otherwise.

\begin{lm}\label{lm:norm-bound}
Let $P$ be a real homogeneous harmonic polynomial of degree $\nu>0$ in $d+1$ variables. If $d$ is odd, set $F_P:= \Theta_P$ and $k:=\tfrac{d+1}{2}+\nu$.  If $d$ is even, let $F_P := \cS(\Theta_P)$ be the Shimura lift of $\Theta_P$ of weight $k:=d+2\nu$. Then
$$
\|F_P\|^2 \ll_d \|P\|_\infty^2 \frac{\Gamma(k)}{(4\pi)^k},
$$
uniformly in $\nu$.  
\end{lm}

\begin{proof}
For a positive integer $M$, let $E_M(z,s)$ be the non-holomorphic Eisenstein series for $\Gamma_0(M)$ at the cusp $\infty$, defined for $\Re(s)>1$ by
$$
E_M(z,s) = \sum_{\gamma \in \Gamma_\infty \backslash \Gamma_0(M)} \Im(\gamma z)^s.
$$
It extends meromorphically to $s \in \C$ and has a simple pole at $s=1$ with positive residue. \newline
Assume first that $d$ is odd and set $k_0:=k-\nu=\tfrac{d+1}{2}$. By Rankin--Selberg unfolding, we find that for $\Re(s)>1$ 
\begin{align*}
I_P(s):=\int_{\Gamma_0(4) \backslash \h} |\Theta_P|^2 E_4(z,s) y^{k-2} dxdy &= \frac{\Gamma(s+k-1)}{(4\pi)^{k+s-1}} \sum_{n\ge 1} \frac{|r_P(n)|^2}{n^{s+k-1}}.
\end{align*}
Since
$$
|r_P(n)| \leq n^{\nu/2} \sum_{|m|^2=n} |P(\tfrac{m}{\sqrt n})| \leq  \|P\|_\infty n^{\nu/2} r_{d+1}(n),
$$
we obtain for real $s>1$
\begin{align}\label{eq:IP(s)_bound}
I_P(s) \leq \|P\|_\infty^2  \frac{\Gamma(s+k-1)}{(4\pi)^{k+s-1}} D_d(s),
\end{align}
where $D_d(s) := \sum_{n\ge 1} \frac{|r_{d+1}(n)|^2}{n^{s+k_0-1}}$. The theta series $\Theta(z)=\sum_{n\ge0} r_{d+1}(n)q^n$
is a (non-cuspidal) modular form of weight $k_0$ and level $4$.  Fix a prime $p$ and consider
$$
J_p(s) := \int_{\Gamma_0(4p^2) \backslash \h} \big( |\Theta(z)|^2 -|\Theta(p^2 z)|^2 \big) E_{4p^2}(z,s) y^{k_0-2} dxdy.
$$
Since $\Theta$ has constant term $1$ at $\infty$, the integral converges for $\Re(s)>1$ and unfolds in the usual way, and we obtain 
$$
J_p(s) = \frac{\Gamma(s+k_0-1)}{(4\pi)^{s+k_0-1}} \big( 1-p^{-2(s+k_0-1)} \big) D_d(s).
$$
From this we see that $D_d(s)$ extends meromorphically to a neighborhood of $s=1$ with at most a simple pole there. The claim follows by taking the residue at $s=1$ on both sides of \eqref{eq:IP(s)_bound}. \\
The case where $d$ is even is similar after an application of Shimura's correspondence. Set $k_0 := k- 2\nu = d$. Let $\cS(\Theta)$ and $\cS(\Theta_P)$ be the Shimura lifts of $\Theta$ and of $\Theta_P$,  respectively; see \cite[Def 15.1.20 for $D=1$]{CohenStromberg}. Then $\cS(\Theta_P)$ is a cusp form of integral weight $k=d+2\nu$ and level $2$, with Fourier coefficients
\begin{align}\label{eq:Shimura coeffs}
a_{\cS(\Theta_P)}(n) = \sum_{\delta\mid n} \delta^{k/2-1} r_P(n^2/\delta^2).
\end{align}
As in the odd-dimensional case, its Petersson norm can be estimated by comparison with $\cS(\Theta)$. Indeed, using the trivial bound
$$
|a_{\cS(\Theta_P)}(n)| \leq  \|P\|_\infty n^\nu \sum_{\delta\mid n} \delta^{k/2-1-\nu} r_{d+1}(n^2/\delta ^2) = \|P\|_\infty n^\nu a_{\cS(\Theta )}(n),
$$
a similar Rankin--Selberg argument as above yields
$$
\|\cS(\Theta_P)\|^2 \ll_d \|P\|_\infty^2 \frac{\Gamma(k)}{(4\pi)^k}.
$$
\end{proof}

\begin{prop}\label{prop:bounds on coeffs dim d}
For every $\eps>0$ and every real homogeneous harmonic polynomial $P$ of degree $\nu>0$ we have
\begin{align*}
|r_P(n^2)| \ll_{d,\eps} \|P\|_\infty \nu^{1/2+\eps} n^{(d-1)/2+\nu+\eps}.
\end{align*}
\end{prop}

\begin{proof}

Assume first that $d$ is odd and set $k = \tfrac{d+1}{2}+\nu$. Then $\Theta_P$ is a cusp form of integral weight $k$ and level $4$. Let $\{f_i\}$ be an orthogonal basis of normalized Hecke eigenforms spanning $S_k(4,\chi)$. Then
$
r_P(n^2) = \sum_i \frac{\scal{\Theta _P,f_i}}{\|f_i\|^2} a_{f_i}(n^2)
$
and by Cauchy-Schwarz and Parseval's identity we find that 
$$
|r_P(n^2)|^2 \le \|\Theta _P\|^2 \sum_i \frac{|a_{f_i}(n^2)|^2}{\|f_i\|^2}. 
$$
Applying Deligne's bound $|a_{f_i}(n^2)| \ll_\eps n^{k-1+\eps}$ and $\|f_i\|^2 \gg_\eps k^{-\eps} \frac{\Gamma(k)}{(4\pi)^k}$ from \cite[Thm 4]{Fomenko1991} gives 
$$
\sum_i \frac{|a_{f_i}(n^2)|^2}{\|f_i\|^2} \ll_{d, \eps} \nu^{1+\eps}  n^{2k-2+2\eps} \frac{(4\pi)^k}{\Gamma(k)},
$$
where we used that $\dim(S_k(4,\chi ))=O(k)$. We estimate the Petersson norm using Lemma~\ref{lm:norm-bound} and conclude.  \\
Now assume that $d$ is even and set $k=d+2\nu$. Then $\cS(\Theta_P)$ is a cusp form of integral weight $k$ and level $2$, with Fourier coefficients given by \eqref{eq:Shimura coeffs}. Applying the same argument as above to $\cS(\Theta_P)$, and using Lemma~\ref{lm:norm-bound}, we obtain
$$
a_{\cS(\Theta_P)}(n) \ll_{d,\eps} \|P\|_\infty \nu^{1/2+\eps} n^{(k-1)/2+\eps}.
$$
M\"obius inversion in the divisor-sum identity \eqref{eq:Shimura coeffs} then yields
$$
r_P(n^2) \ll_{d,\eps} \|P\|_\infty \nu^{1/2+\eps} n^{(k-1)/2+\eps} = \|P\|_\infty \nu^{1/2+\eps} n^{(d-1)/2+\nu+\eps}.
$$
\end{proof}

\subsection{Theta series of lattices over quaternion algebras} \label{subsection:thetaseries_quaternion}
In the proof of  \cref{prop:bounds on coeffs dim d} we had to expand the theta series $\Theta_P$ in a linear combination of Hecke eigenforms. This expansion leads to a bound on the Fourier coefficients $r_{P}(n^2)$ that depends on the dimension of the space of cusp forms of weight $\tfrac{d+1}{2}+\nu$, which adversely affects the rate of small-scale equidistribution for $\Omega_n$. In this subsection, we focus specifically on the two-sphere $S^2$, where we can bypass the expansion by using maximal orders in quaternion algebras to work directly with theta series that are already Hecke eigenforms. 

Let $\Lambda$ be the lattice of the trace zero elements in the suborder $\Z + 2 \mathcal{O}$:
$$
 \Lambda = \{bi + cj + dk : (b,c,d) \in \Z^3 \text{ with the same parity} \}.
$$
We consider the quaternionic theta series (the `Waldspurger lift' of $P$) 
$$
\Theta_{\Lambda,P} = \sum_{x\in \Lambda} P(x) q^{\mathrm{nr}(x)} =  \sum_{n=1}^\infty r_{\Lambda,P}(n)  q^n.
$$

The parity condition defining $\Lambda$ forces $r_{\Lambda,P}(n)=0$ whenever $n\equiv1,2$ (mod 4). The theta series $\Theta_{\Lambda,P}$ is a cusp form that lies in the Kohnen plus space $S_{3/2+\nu}^+(4,\chi)$.

\begin{prop}\label{prop:bounds on coeffs dim 2}
Let $\nu>0$ be even. Let $P\in \cH_\nu(S^2)$ be a common eigenfunction of all Hecke operators $\tilde{T}_p$  (for odd primes $p$) and of the involution $\tilde{W}_2$. Then for every odd $n$ and every fundamental discriminant $D<0$ we have
$$
|r_{\Lambda,P}(|D|n^2)| \leq |r_{\Lambda,P}(|D|)|\, n^{\nu+1/2+o(1)}.
$$
\end{prop}

\begin{proof}
The action of the (half-integral weight) Hecke operators $T_{p^2}$ on the theta series $\Theta_{\Lambda,P}$ can be represented by the Hecke operators $\tilde{T}_p$ on $\mathcal H_\nu(S^2)$ defined in \cref{Heckeoperators:Brandt_matrices}. More precisely, we have the Eichler commutation relation 
\begin{align} \label{Eichler_commutation_relation}
    T_{p^2} \Theta_{\Lambda,P} = p^\nu \Theta_{\Lambda, \tilde{T}_p P};
\end{align}
see \cite{bocherer1994mellin} or \cref{appendix:Eichler}, where we give a self-contained proof. In particular, if $P$ is a common eigenfunction of all $\tilde T_p$ (for odd primes $p$), then $\Theta_{\Lambda,P}$ is an eigenform for all odd Hecke operators $T_{p^2}$. Furthermore, the operators $\tilde{T}_p$ satisfy the same multiplicative identities as the Hecke operators $T_{p^2}$. From our assumptions and the Eichler commutation relation \eqref{Eichler_commutation_relation} it follows that $\Theta_{\Lambda,P}  \in S_{3/2+\nu}^+(4,\chi)$ is a Hecke eigenform.  
By \cite[Thm 1]{kohnen1980modular}, the Shimura lift $\mathcal S_D(\Theta_{\Lambda,P}) = \sum_{n \ge 1} A_{D}(n)q^n$ corresponding to the fundamental discriminant $D$ is a Hecke eigenform of integral weight $2+2\nu$ and level 1 with Fourier coefficients
\begin{align}\label{def:Shim}
A_D(n) = \sum_{\delta \mid n} \chi_D(\delta) \delta^\nu r_{\Lambda,P} (|D|n^2/\delta^2).
\end{align}
If $A_D(1) = r_{\Lambda,P}(|D|)=0$, then $\mathcal S_D(\Theta_{\Lambda,P})$ is identically zero, and Möbius inversion gives $r_{\Lambda,P}(|D|n^2)=0$ for all $n$. Thus the claim is trivial in this case. We will henceforth assume $A_D(1) \neq 0$ and set $R_D(n) = A_D(n) / A_D(1)$. Deligne's bound applied to the normalized Hecke eigenform implies that $|R_{D}(n)|\leq d(n)n^{1/2+\nu}.$  Möbius inversion in \eqref{def:Shim} gives the identity
\begin{align*}
  r_{\Lambda,P}(|D|n^2) = r_{\Lambda,P}(|D|) \sum_{\delta\mid n} \mu(\delta)\chi_{D}(\delta) \delta^\nu R_{D}(n/\delta),
\end{align*}
from which the bound follows immediately. 

\end{proof}

\begin{rmk}
    To compare this bound with that of \cref{prop:bounds on coeffs dim d}, note that $|r_{\Lambda,P}(|D|)\leq \|P\|_\infty |D|^{\nu/2} r_{\Lambda,1}(|D|)$. We will later need the sharper form stated in \cref{prop:bounds on coeffs dim 2}.
\end{rmk}

\begin{coro}\label{coro:r_P(dn^2)}
For $P$ as in \cref{prop:bounds on coeffs dim 2}, $n$ odd and $d$ squarefree, we have   
\begin{align*}
|r_P(dn^2)|\ll |r_P(d)| n^{\nu+1/2+o(1)}.
\end{align*}
\end{coro}
\begin{proof}

Set
\begin{align*}
D_d = \begin{dcases}
    -4d, &\text{ if } d\equiv 1,2\, \text{(mod 4)} ;\\
    -d, &\text{ if } d\equiv 3\, \text{(mod 4)}.
  \end{dcases}
\end{align*}
Then $D_d$ is a fundamental discriminant. If $d\equiv 3$ (mod 4), then every integer solution of $x^2+y^2+z^2=dn^2$ has $x,y,z$ all odd, since $n$ is odd. Hence we have $r_{\Lambda,P}(dn^2)=r_P(dn^2)$. If $d\equiv 1,2$ (mod 4), then every solution of $x^2+y^2+z^2=4dn^2$ has $x,y,z$ all even; we conclude that $r_{\Lambda,P}(4dn^2)= 2^\nu r_P(dn^2)$. Thus, in both cases, \cref{prop:bounds on coeffs dim 2} applied with $D=D_d$ gives the result.

\end{proof}

\begin{rmk}
    By Legendre's three square theorem, we have $r_P(dn^2)=0$ if $d\equiv 7$ (mod 8).
\end{rmk}

For $d$ squarefree, Waldspurger's formula gives \cite[(5.4)]{bourgain2016spatial} 
\begin{align*}
    |r_P(d)|^2 = c\frac{d^{\nu+1/2}L(1/2,f)L(1/2,f\times \chi_{D_d})}{L(1,\mathrm{Sym}^2 f)}
\end{align*}
for an absolute constant $c>0$ and where $f$ a holomorphic Hecke cusp form of weight $2\nu+2$. With GLH and Hoffstein--Lockhart, this realization implies 
\begin{align*}
|r_P(d)|^2 \ll d^{\nu+1/2+\eps}\nu^\eps
\end{align*}
for $d\not\equiv 7$ (mod 8) squarefree. This implies that 
\begin{align} \label{eq:GLH:square-bound}
|r_P(dn^2)|\ll d^{\nu/2+1/4+\eps}\nu^\eps n^{\nu+1/2+o(1)}.
\end{align}

\section{From Weyl sums to small-scale equidistribution}

\subsection{Weyl sums and spectral inputs}

Let $P\in\mathcal H_\nu$ be a spherical harmonic of degree $\nu$.  We introduce the primitive Weyl sum 
$$
W_n(P) := \sum_{x\in \Omega_n} P(x) = n^{-\nu} r_P^*(n^2),
$$
for 
\begin{align} \label{eq:Moebiusinv_rPprim}
    r_P^*(n^2)  &\coloneqq \sum_{\substack{m\in\Z^{d+1}\\ |m|^2=n^2\\ (m,n)=1}} P(m) = \sum_{\delta\mid n} \mu(\delta) \delta^\nu  r_P(n^2/\delta^2),
\end{align}
where $r_P(m)$ is the $m$-th Fourier coefficient of the theta series $\Theta_P$.

By \cref{prop:bounds on coeffs dim d} we get, for any $\eps>0$,
\begin{align}\label{eq:NT input}
W_n(P) &\ll \|P\|_\infty \nu^{1/2 +\eps/2 } n^{(d-1)/2  + \eps/2 }  \sum_{\delta\mid n} \delta^{-(d-1)/2 - \epsilon/2}  \nonumber\\ 
  & \ll \|P\|_\infty \nu^{1/2+\eps/2}  n^{(d-1)/2  + \eps/2 }  d(n)  \nonumber \\
  &\ll \|P\|_2 \nu^{d/2+\varepsilon} n^{(d-1)/2  + \eps },
\end{align}
where in the last step we use the sup-norm bound $\|P\|_\infty \ll \|P\|_2 \nu^{(d-1)/2}$ for eigenfunctions of the spherical Laplacian \cite{avakumovic1956eigenfunktionen,levitan1952asymptotic}. For the zonal spherical harmonic $P = Z_\nu(\cdot,\alpha)$, we have $\|P\|^2_2= \dim \cH_\nu\asymp \nu^{d-1}$.   
Hence
\begin{align} \label{eq:WS_bound}
W_n(Z_\nu(\cdot,\alpha)) \ll \nu^{d-1/2+\eps} n^{(d-1)/2+\eps}. 
\end{align}

We now specialize to $S^2$. For each $\nu>0$, let $\cB_\nu$ be an orthonormal basis of $H_\nu(S^2)$ consisting of simultaneous eigenfunctions of the quaternionic Hecke operators $\tilde T_p$ for odd primes $p$, and of the involution $\tilde W_2$. Let $n$ be odd and $P\in \cB_\nu$. If $\nu$ is odd then $W_n(P)=0$ and $r_P(1)=0$ by the symmetry $m\mapsto -m$. Hence it suffices to consider even $\nu$, where \cref{coro:r_P(dn^2)} applies. Applying \cref{coro:r_P(dn^2)} with $d=1$ we obtain instead
\begin{align} \label{eq:Hecke_bound}
W_n(P) \ll |r_P(1)| n^{1/2+o(1)}.
\end{align}

We shall also use the pre-trace formula \cref{eq:pre-trace formula} for spherical harmonics. In the notation of \cref{sec:zonal harmonics}, for every orthonormal basis $\cB_\nu\subset H_\nu(S^2)$,
$$
Z_\nu(x,y)=\sum_{P\in\mathcal B_\nu}P(x)P(y).
$$
In particular,
$$
\sum_{P\in\cB_\nu}P(x)^2 =Z_\nu(x,x)= \dim H_\nu(S^2) =2\nu+1.
$$
Moreover, by Cauchy--Schwarz we get 
\begin{align} \label{eq:Zv_estimate}
|Z_\nu(x,y)| \leq Z_\nu(x,x)^{1/2}Z_\nu(y,y)^{1/2} = 2\nu+1.
\end{align}
We will repeatedly make use of the following estimates.

\begin{prop}\label{prop:spectral second moment}
Let $n$ be odd and let $\cB_\nu$ be an orthonormal Hecke eigenbasis of $H_\nu(S^2)$. Then 
$$
\sum_{P\in \cB_\nu} |W_n(P)|^2 \ll n^{1+o(1)}(2\nu+1)
$$
and uniformly in $\alpha\in S^2$,
$$
\sum_{P\in \cB_\nu} |P(\alpha)W_n(P)| \ll n^{1/2+o(1)}(2\nu+1).
$$
\end{prop}

\begin{proof}
The bound \eqref{eq:Hecke_bound} gives 
$$
\sum_{P \in \cB_\nu} |W_n(P)|^2 \ll n^{1+o(1)} \sum_{P \in \cB_\nu} |r_P(1)|^2.
$$
We note that
$$
r_P(1) = \sum_{x\in S} P(x), \qquad S=\{\pm e_1,\pm e_2,\pm e_3\}.
$$
Therefore
$$
\sum_{P\in\cB_\nu}|r_P(1)|^2 = \sum_{x,y\in S}Z_\nu(x,y) \ll 2\nu+1,
$$
where we used \eqref{eq:Zv_estimate}. For the second estimate, Cauchy--Schwarz gives 
\begin{align*}
\sum_{P\in \cB_\nu} \left|P(\alpha)W_n(P)\right|
& \le \left(\sum_{P\in\cB_\nu} |P(\alpha)|^2 \right)^{1/2}   \left( \sum_{P\in \cB_\nu} |W_n(P)|^2  \right)^{1/2} \\
&\ll (2\nu+1)^{1/2} \left( n^{1+o(1)}(2\nu+1) \right)^{1/2} \\
&= (2\nu+1) n^{1/2+o(1)}.
\end{align*}  
\end{proof}

\subsection{Smoothing} \label{sec:smoothing}
For each $\nu>0$, fix an orthonormal basis $\cB_\nu$ of $\cH_\nu$. The spectral expansion \cref{spectral_expansion_general} of $f_{R,\alpha }$ fails to converge pointwise, so we need to regularize.  Let $\rho\in(0, R/2 )$ be a small parameter. We use the basis $\cB_\nu$ to spectrally expand the point average of the convolution $f_{R\pm \rho ,\alpha }\ast f_{\rho,\alpha }$ as follows
\begin{align} \label{eq:smoothed-spectral-expansion}
  \sum_{x\in \Omega_n} (f_{R\pm \rho ,\alpha }\ast f_{\rho,\alpha }) (x) = |\Omega_n| + \sum_{\nu>0} \hat f_{R\pm \rho ,\alpha }(\nu) \hat f_{\rho,\alpha }(\nu) \sum_{P\in \cB_\nu} P(\alpha ) \sum_{x\in \Omega_n} P(x).
\end{align} 
Note that for two zonal functions with pole $\alpha$, we write $f\ast g$ for the spherical convolution normalized so that $\widehat{f\ast g}(\nu)=\widehat f(\nu) \widehat g(\nu)$. The advantage is that this smoothed expansion converges pointwise. We then rely on the sandwich inequality
\begin{align}\label{HR sandwich}
  \frac{\mu (C_{R-\rho})}{\mu (C_R)} (f_{R-\rho,\alpha }\ast f_{\rho,\alpha} ) \leq f_{R,\alpha} \leq \frac{\mu (C_{R+\rho})}{\mu (C_R)} (f_{R+\rho,\alpha }\ast f_{\rho,\alpha} )
\end{align}
as done in \cite{lubotzky1986hecke,HumphriesRadziwill2022}.

\subsection{Variance for square levels: proof of \cref{thm:variance} and \cref{coro:a.e. optimal}}
Let
$f_{R,\alpha} = \frac{\mathbf 1_{C_R(\alpha)}}{\mu(C_R)}.$
Then
$$
|\Omega_n\cap C_R(\alpha)| - |\Omega_n|\mu(C_R) = \mu(C_R) \left( \sum_{x\in\Omega_n}f_{R,\alpha}(x)-|\Omega_n| \right).
$$
Using the spectral expansion in $L^2$, and choosing $\mathcal B_\nu$ to be a Hecke eigenbasis, we get
$$
\sum_{x\in\Omega_n}f_{R,\alpha}(x)-|\Omega_n| = \sum_{\nu>0} \widehat f_R(\nu) \sum_{P\in\mathcal B_\nu} P(\alpha)W_n(P).
$$
By orthogonality and \cref{prop:spectral second moment} this is 
\begin{align*}
\int_{S^2} \left( |\Omega_n\cap C_R(\alpha)| - |\Omega_n|\mu(C_R) \right)^2 d\mu(\alpha) &= \mu(C_R)^2 \sum_{\nu>0} |\widehat f_R(\nu)|^2 \sum_{P\in\mathcal B_\nu} |W_n(P)|^2 \\
&\ll \mu(C_R)^2 n^{1+o(1)} \sum_{\nu>0} |\widehat f_R(\nu)|^2 (2\nu+1).
\end{align*}
By Parseval we have 
$$
\sum_{\nu\ge0} |\widehat f_R(\nu)|^2(2\nu+1) = \|f_{R,\alpha}\|_2^2 = \mu(C_R)^{-1}.
$$
Since $|\Omega_n|=n^{1+o(1)}$ for odd $n$, this gives
$$
\int_{S^2} \left( |\Omega_n\cap C_R(\alpha)| - |\Omega_n|\mu(C_R) \right)^2 d\mu(\alpha) \ll n^{o(1)}|\Omega_n|\mu(C_R).
$$
This proves \cref{thm:variance} and it remains to prove \cref{coro:a.e. optimal}. Chebyshev's inequality gives, for every $\lambda>0$,
$$
\mu\left( \left\{ \alpha\in S^2: \left| \frac{1}{|\Omega_n|}\sum_{x\in\Omega_n}f_{R,\alpha}(x)-1 \right|>\lambda \right\} \right) \ll \lambda^{-2} n^{o(1)} \bigl(|\Omega_n|\mu(C_R)\bigr)^{-1}.
$$
Fix $\delta>0$ and choose $\delta^-$ such that $\frac{\delta}{2}<\delta^-<\delta.$ For fixed $R$, set $\varepsilon_R := n^{\delta^-} \bigl( |\Omega_n|\mu(C_R) \bigr)^{-1/2}.$ Then Chebyshev gives an exceptional set $\cE_n(R)$ with measure $\mu(\cE_n(R)) \ll n^{-2\delta^- +o(1)}$ outside of which 
we have
\begin{align*}
  \frac{|\Omega_n \cap C_R(\alpha )|}{|\Omega_n| \mu (C_R)} = 1 + O(\epsilon_R ).
\end{align*}
If $R\gg n^{-1/2+\delta}$, then $|\Omega_n|\mu(C_R)\gg n^{2\delta+o(1)}$ and therefore $\eps_R=o(1)$. To make the exceptional set independent  of $R$, choose $0<\kappa<2\delta^- -\delta$ and apply the preceding estimate to a geometric  progression of radii with ratio $1+n^{-\kappa}$ covering the range $R\gg n^{-1/2+\delta}$. This progression contains $O(n^\kappa\log n)$ radii and hence the union of the exceptional sets has measure 
$O(n^\kappa \log n) n^{-2\delta^- + o(1)} = o(n^{-\delta})$.
An arbitrary admissible radius is handled by sandwiching between  two consecutive radii.

\subsection{Small-scale equidistribution: proof of \cref{thm:dim 2}}
For each $\nu>0$, choose the orthonormal basis $\cB_\nu$ in \eqref{eq:smoothed-spectral-expansion} to be a Hecke eigenbasis of $\cH_\nu(S^2)$. Applying \cref{prop:spectral second moment} then gives 
\begin{align*}
\sum_{x\in \Omega_n}(f_{R\pm\rho,\alpha}* f_{\rho,\alpha})(x) = |\Omega_n| + O\left( n^{1/2+o(1)} \sum_{\nu>0} | \widehat f_{R\pm\rho}(\nu) \widehat f_\rho(\nu) | (2\nu+1) \right).
\end{align*}
By Cauchy--Schwarz and Parseval we get 
\begin{align*}
\sum_{\nu>0} |\widehat f_{R\pm\rho}(\nu)\widehat f_\rho(\nu)|(2\nu+1) & \le
\left(\sum_{\nu\ge0}|\widehat f_{R\pm\rho}(\nu)|^2(2\nu+1)\right)^{1/2}
\left(\sum_{\nu\ge0}|\widehat f_\rho(\nu)|^2(2\nu+1)\right)^{1/2} \\
&= \mu(C_{R\pm\rho})^{-1/2}\mu(C_\rho)^{-1/2} \ll R^{-1}\rho^{-1}.
\end{align*}
Thus 
\begin{align*}
\sum_{x\in \Omega_n}(f_{R\pm\rho,\alpha}* f_{\rho,\alpha})(x) = |\Omega_n| + O\left( n^{1/2+o(1)} R^{-1} \rho^{-1} \right).
\end{align*}
Using $|\Omega_n| = n^{1 + o(1)}$ gives  
\begin{align*}
\frac{\mu(C_{R\pm\rho})}{\mu(C_R)|\Omega_n|} \sum_{x\in \Omega_n}(f_{R\pm\rho,\alpha}*f_{\rho,\alpha})(x) = 1 + O\left( R^{-1} \rho+ R^{-1} \rho^{-1} n^{-1/2+o(1)} \right).
\end{align*}
Optimizing the error term by choosing $\rho = n^{-1/4}$ we conclude with the sandwich inequality \cref{HR sandwich} that small-scale equidistribution holds whenever $R\gg n^{-1/4+o(1)}$.

\subsection{Wasserstein equidistribution: proof of \cref{thm:wasserstein}}
For $P\in \cB_\nu$, we have $\int_{S^2}P\,d\mu_n = \frac{1}{|\Omega_n|}W_n(P)$. Hence by \cref{prop:spectral second moment} and $|\Omega_n|=n^{1+o(1)}$ we have 
\begin{align*}
    \sum_{P\in \cB_\nu} \left| \int_{S^2} P\, d\mu_n \right|^2 \ll \frac{1}{|\Omega_n|^2} \sum_{P\in \cB_\nu} |W_n(P)|^2 \ll (2\nu+1) n^{-1+o(1)}.
\end{align*}
We apply \cref{prop:wasserstein-smoothing} with $\eta=\mu_n$ (for $n$ odd). Since $-\Delta$ has eigenvalue $\nu(\nu+1)$ on $H_\nu$ we obtain, for $0<t\leq 1$, 
\begin{align*}
    W_1(\mu_n,\mu) \ll t^{1/2} + \left(n^{-1+o(1)}\sum_{\nu>0}  \frac{e^{-2t\nu(\nu+1)}(2\nu+1)}{\nu(\nu+1)}\right)^{1/2}.
\end{align*}
The remaining sum satisfies
\begin{align*}
    \sum_{\nu>0} \frac{e^{-2t\nu(\nu+1)}(2\nu+1)}{\nu(\nu+1)} \ll \sum_{\nu\geq1}\frac{e^{-c t\nu^2}}{\nu} \ll \log(2/t).
\end{align*}
Choosing $t=n^{-2}$ yields the stated estimate.

We now explain why the exponent $1/2$ is best possible. Let $\eta$ be supported on a set $A$ of $N$ points on $S^2$. For $r\asymp N^{-1/2}$, the union $U_r \coloneqq  \bigcup_{x\in A} C_r(x)$ has $\mu(U_r) \leq 1/2$. For any coupling of $\eta$ and $\mu$,
\begin{align*}
    \int d_{S^2}(x,y)\, d\pi(x,y) \geq r\, \mu(S^2 \setminus U_r) \gg N^{-1/2}.
\end{align*}
Taking the infimum  over couplings yields $W_1(\eta,\mu)\gg N^{-1/2}$. Since $|\Omega_n|=n^{1+o(1)}$ (for $n$ odd), the result is optimal up to the $n^{o(1)}$ factor.

\subsection{Higher dimensional equidistribution: proof of \cref{thm:small scale}}
We restrict throughout to $n\in\mathcal N_d$, where $\mathcal N_d\subset \mathbb N$ has positive density and $|\Omega_n| \asymp n^{d-1}.$ Combining the smoothed spectral expansion \eqref{eq:smoothed-spectral-expansion} with the Weyl-sum estimate \eqref{eq:WS_bound} gives 
\begin{align*}
  \sum_{x\in \Omega_n} (f_{R\pm \rho ,\alpha }\ast f_{\rho,\alpha }) (x) = |\Omega_n| + O\left( \sum_{\nu>0} \hat f_{R\pm \rho ,\alpha }(\nu) \hat f_{\rho,\alpha }(\nu) \nu^{d-1/2+\eps}\right) n^{(d-1)/2 + \eps}.
\end{align*}
We split the sum into the ranges
$$
\nu\le R^{-1},\qquad R^{-1}<\nu\le \rho^{-1},\qquad \nu>\rho^{-1}.
$$
Applying the estimates on Gegenbauer coefficients given by \cref{prop:Bessel coeffs} then gives 
\begin{align*}
\sum_{x\in\Omega_n}(f_{R\pm\rho,\alpha}*f_{\rho,\alpha})(x) = |\Omega_n| + O\left( n^{(d-1)/2+\varepsilon} \left( R^{-d-1/2-\varepsilon} + R^{-(d+1)/2}\rho^{-d/2-\varepsilon} \right) \right).
\end{align*}
Using $\mu(C_{R\pm\rho}) = \mu(C_R) \left( 1 + O(\rho/R) \right)$ and   $|\Omega_n| \asymp n^{d-1}$ for $n \in \mathcal N_d$ implies that 
\begin{align*}
&\frac{\mu(C_{R\pm\rho})}{\mu(C_R)|\Omega_n|} \sum_{x\in \Omega_n} (f_{R\pm\rho,\alpha}\ast f_{\rho,\alpha})(x) \\
&=  1+ O\left( \frac{\rho}{R} + \left( R^{-d-1/2-\varepsilon} + R^{-(d+1)/2}\rho^{-d/2-\varepsilon} \right) n^{-(d-1)/2+\varepsilon} \right).
\end{align*}
Choosing $\rho = (R n)^{-(d-1)/(d+2)}$ and assuming $R\gg n^{-(d-1)/(2d+1)+\eta}$, then $\rho < R/2$ for sufficiently large $n$, and the error term above is $o(1)$, after choosing $\eps >0$ sufficiently small in terms of $\eta$. The sandwich inequality \cref{HR sandwich} now yields, uniformly in $\alpha$,
$$
\frac{|\Omega_n\cap C_R(\alpha)|}{|\Omega_n|\mu(C_R)} = 1 + o(1)
$$
for $n\in \mathcal N_d$ and for every $R\gg n^{-(d-1)/(2d+1)+\eta}$. Since $\eta > 0$ is arbitrary, this gives the claimed range.

\subsection{Conditional admissible levels: proof of \cref{thm:BRS all levels}}
For an admissible $N$ write $N=4^aM$ with $4\nmid M$. Repeated reduction modulo $4$ shows that any solution of $x^2+y^2+z^2 = N$ is of the form $(x,y,z) = 2^a (x',y',z')$ with $x'^2+y'^2+z'^2 = M$. Since $(x,y,z)/ \sqrt N = (x',y',z')/ \sqrt M$, we have $\mathcal E_N = \mathcal E_M$. Since $N$ is admissible, Legendre's three-square theorem implies $M \not\equiv 7 \pmod 8$. Write $M=dq^2$ with $d$ squarefree. Since $4\nmid M$, the integer $q$ is odd. Hence $q^2 \equiv 1 \pmod 8$ and so $M = dq^2 \equiv d \pmod 8$. Hence $d \not\equiv 7 \pmod 8$.
Let \(P\in H_\nu(S^2)\) be an \(L^2\)-normalized Hecke eigenfunction with $\nu > 0$ even. (For odd $\nu$, the Weyl sum vanishes by antipodal symmetry.) Using the bound \eqref{eq:GLH:square-bound} we obtain  
    \begin{align*}
|r_P(M)| = |r_P(dq^2)| \ll d^{\nu/2+1/4+\eps}\nu^\eps q^{\nu+1/2+o(1)} \ll M^{\nu/2+1/4+\eps} \nu^\eps.
\end{align*}
Therefore
    $$
    \left| \sum_{\xi \in  \cE_N} P(\xi) \right| = M^{-\nu/2} |r_P(M)| \ll  M^{1/4+\eps} \nu^\eps \ll N^{1/4+\eps} \nu^\eps.
    $$
This is the Weyl-sum input used in the proof of the variance estimate of \cite[Thm 1.7]{bourgain2016spatial}, so the same argument extends to every admissible $N$.

\section{Covering and Diophantine applications}

\subsection{Rational points of bounded height}

We can also examine the small-scale equidistribution for the point sets $\Omega_T$ consisting of all rational points on $S^d$ of height up to $T$.

\begin{prop}
Let $\Omega_T$ denote the rational points on $S^d$ of height at most $T$. For $d\not\equiv 1 \pmod 4$,
$$
\frac{|\Omega_T\cap C_R(\alpha)|}{|\Omega_T|\mu(C_R)} = 1 + O\left( R^{-2d/(d+1)}T^{-d/(d+1)}\log^{2\eta/(d+1)}T\right)
$$
is valid whenever $R\gg T^{-1/2}\log^{\eta/d}T$.
\end{prop}

\begin{proof}
 In  \cite[Thm 8.4]{BurrinGroebner2024} we established using contour integration of $L$-functions the upper bound
\begin{align*} 
    \sum_{x\in \Omega_T} P(x) \ll \|P\|_\infty \log^\eta (T) T^{d/2}
\end{align*}
for $d\not\equiv 1$ (mod 4), with $\eta=1/2$ if $d>2$ and $\eta=3/2$ if $d=2$.\footnote{ For $d\equiv 1$ (mod 4), the same method gives only the weaker bound $$ \sum_{x\in \Omega_T} P(x) \ll_\eps \|P\|_\infty T^{(d+1)/2+\eps}.$$  }   Reworking the proof of \cref{thm:small scale} with this estimate, we obtain the effective equidistribution result
\begin{align*}
    \frac{|\Omega_T\cap C_R(\alpha)|}{|\Omega_T|\mu(C_R)} = 1 + O\op{R^{-2d/(d+1)}T^{-d/(d+1)}\log^{2\eta/(d+1)}(T)},
\end{align*}
valid whenever $R\gg T^{-1/2}\log^{\eta/d}(T)$. 
\end{proof}
This result improves the speed of equidistribution in spherical caps obtained by \cite{KelmerYu2023} by a factor of $2$. Their approach, based on counting with Siegel transforms, has the remarkable feature that when $d\not\equiv 1$ (mod 8) their equidistribution results hold for the full shrinking range $R \gg T^{-1/2}$. Similarly, the variance computation in the proof of  \cref{thm:dim 2} establishes that small-scale equidistribution for $\Omega_T$ holds for each dimension $d\not\equiv1$ (mod 4) and almost every spherical cap whenever $R\gg T^{-1+o(1)}$.

These results complement the covering radius discussion of $\Omega_T$; we now prove \cref{thm:Diophantine}.

\begin{proof}[Proof of \cref{thm:Diophantine}]
The repulsion that is  observed numerically about rational points of small denominator in \cref{fig:Omega_T} can be quantified arithmetically as follows.  If $x=\tfrac{m}{n}$ and $x'=\tfrac{m'}{n'}$ are distinct rational points on $S^d$, written in lowest terms, then  we have the elementary computation
\begin{align}\label{eq: arithmetic repulsion}
|x-x'|^2 = 
\frac{2(nn'-\scal{m,m'})}{nn'} \geq \frac{2}{nn'}.
\end{align} 
 In particular, if $x'$ is a fixed rational point of bounded height (e.g. the north pole of the sphere), then every $x \in \Omega_T \setminus \{x'\}$ satisfies $|x-x'| \gg T^{-1/2}$. Let $y$ be the midpoint of a shortest geodesic joining $x'$ to the nearest point of $\Omega_T \setminus \{ x'\}$. Then $\mathrm{dist}(y, \Omega_T) \gg T^{-1/2}$, which implies that $K((\Omega_T)) \ge 2$.  
In the other direction, Dirichlet's theorem for rational points on the sphere \cite[Thm 4.1]{KleinbockMerrill2015} asserts the existence of a positive constant $C$ that for any $\alpha \in S^d$, $T>0$, we have $\Omega_T\cap C_R(\alpha )\neq\emptyset$ for $R\geq C T^{-1/2}$; hence $K((\Omega_T))\leq 2$.

We now turn to the generic covering exponent. 
  Let $(R_T)_T$ be a sequence of radii such that
$$
\limsup_{T \to \infty} \frac{\log |\Omega_T|}{-\log \mu(C_{R_T})} < 1.
$$
Then there exists $0<\eta<1$ such that for all sufficiently large $T$ we have $\mu(C_{R_T}) \leq |\Omega_T|^{-1/(1-\eta)}$. Hence
$$
\mu \left( \bigcup_{x\in\Omega_T} C_{R_T}(x) \right) \leq |\Omega_T| \mu(C_{R_T}) \leq |\Omega_T|^{-\eta/(1-\eta)} \to 0
$$
as $T \to \infty$. Thus no such sequence $(R_T)_T$ can cover asymptotically almost all of $S^d$ and therefore $K_\mu((\Omega_T)) \geq 1$.  
The corresponding upper bound $K_\mu\leq 1$ can be derived from the counting result \cite[Thm 1.8(b)]{KelmerYu2023}.
\end{proof}

\subsection{Sums of two squares and a mini-square: proof of \cref{thm:Linnik}}

The pointwise smoothing argument of Section 4.4 applies equally if we replace $C_R(\alpha )$ by the annulus $A_{r,R}(\alpha)=\{ x\in S^2 : r<|x-\alpha|<R\}.$
Let $\alpha$ be the north pole. The equatorial strip $A=\{x=(x_1,x_2,x_3)\in S^2: |x_3|<n^{-\delta}\}$ is such an annulus, with $r^2=2(1-n^{-\delta})$ and $R^2=2(1+n^{-\delta}).$ Then $\mu(A)\asymp n^{-\delta}$ and
$$
|\Omega_n\cap A|=|\Omega_n|\mu(A)(1+O(n^{-(1-3\delta)/4+o(1)}))=|\Omega_n|\mu(A)(1+o_\delta(1)) 
$$
as long as $\delta<1/3$. Hence for any odd square $\ell=n^2$ there exist $x,y,z\in\Z$ coprime such that $x^2+y^2+z^2=\ell$ with $|z| \ll n^{1-\delta} \ll_\eps l^{1/3+\eps}$.

\subsection{Conditional counterpart: proof of \cref{thm:conditional Linnik}}
Let $\cE_n^{\rm prim}=\{x\in \Z^3_{\rm prim} : |x|=n\}$. In particular $|\cE_n^{\rm prim}|=r^*_3(n)\gg_\eps n^{1/2-\eps}$. 

Using \cref{prop:bounds on coeffs dim 2} and its corollary, we have 
\begin{align*}
    |r_P^*(d\ell^2)| \ll |r_P(d)|\, \ell^{\nu+1/2+o(1)}.
\end{align*}
On the other hand, Waldspurger's formula, in the form \cite[(5.4)]{bourgain2016spatial} gives 
\begin{align*}
    |r_P(d)|^2 =     c \frac{d^{\nu+1/2}L(1/2,f)L(1/2,f\times \chi_{D_d})}{L(1,{\rm Sym}^2 f)},
\end{align*}
where $f$ is the holomorphic Hecke cusp form of weight $2+2\nu$ associated to $P$ and 
\begin{align*}
    D_d =  \begin{cases}
        -4d, & d\equiv 1,2 \pmod 4,\\
        -d,  & d\equiv 3 \pmod 4.
        \end{cases}
\end{align*}
By GLH, together with the Hoffstein--Lockhart lower bound for $L(1,{\rm Sym}^2 f)$, this implies 
\begin{align*}
    |r_P(d)| \ll d^{\nu/2+1/4+\eps} \nu^\eps.
\end{align*}
Consequently, for $n=d\ell^2$,
\begin{align*}
    \left| \sum_{x\in \cE^{\rm prim}_{n}} P(x) = n^{-\nu/2}|r_P^*(n)| \right| \ll n^{-\nu/2} d^{\nu/2+1/4+\eps} \ell^{\nu+1/2+o(1)} \ll n^{1/4+\eps}.
\end{align*}
Let $A\subset S^2$ be measurable and put $f_A= {\bf 1}_A/\mu(A)$. Using the pre-trace formula 
$
\sum_{P\in \mathcal{B}_\nu} P(x)^2 = 2\nu+1
$
as in the proof of \cref{thm:dim 2}, and then applying Cauchy--Schwarz and Parseval, the preceding bound gives
$$
\sum_{x\in \cE^{\rm prim}_{n}} f_{A}\ast f_{\rho,\alpha}(x) = |\cE^{\rm prim}_{n}| + O\left( n^{1/4+\eps}\big(\mu(A) \mu(C_\rho)\big)^{-1/2}\right),
$$
where $f_{\rho,\alpha}$ is the normalized indicator of the spherical cap $C_\rho(\alpha)$.  Fix $\eta>0$, set $h=n^{-1/6+\eta}$, $\rho=\frac h{10}$ and let $A_h=\{u\in S^2:|u_3|\le h\}.$ Since $\mu(A_h)\asymp h$ and $\mu(C_\rho)\asymp h^2$, the smoothed estimate gives
$$
\sum_{x\in\cE^{\rm prim}_{n}}(f_{A_h}*f_\rho)(x) = |\cE^{\rm prim}_{n}|+O_\varepsilon(n^{1/4+\varepsilon}h^{-3/2}) = |\cE^{\rm prim}_{n}|+o(|\cE^{\rm prim}_{n}|),
$$
after choosing $\varepsilon>0$ sufficiently small. Hence there exists $x\in\cE^{\rm prim}_{n}$ with $(f_{A_h}*f_\rho)(x)>0$, so $x$ lies within distance $O(\rho)$ of $A_h$. Writing $x=(x_1,x_2,z)/\sqrt n$, we get $\frac{|z|}{\sqrt n}\ll h+\rho\ll h,$ and therefore $|z|\ll \sqrt n h=n^{1/3+\eta}.$ Since $\eta>0$ is arbitrary, this proves $|z|\ll_\varepsilon n^{1/3+\varepsilon}$.

\appendix

\section{Size of rational points of fixed height} \label{app:size of Omega_n}
Let $Q(x)=\frac12 x^T A x$ be a positive definite quadratic form in $d+1$ variables. Assume that $A$ is symmetric with integral entries and even diagonal entries. Assume moreover that there exists a positive integer $N$, equal to $4$ times a squarefree odd integer, such that $NA^{-1}$ has integral entries and even diagonal entries. Let
$$
\mathcal E_Q := \{x\in \mathbb R^{d+1}:Q(x)=1\}
$$
be the associated ellipsoid. For $n\ge 1$, let $\Omega_n$ denote the set of rational points on $\mathcal E_Q$ of height exactly $n$.

\begin{thm}\label{thm:size of Omega_n}
Assume $d\ge 4$. There exists an arithmetic progression $\mathcal S\subset \mathbb N$ such that
$$
|\Omega_n| \asymp_Q n^{d-1}
$$
for all $n\in \mathcal S$.
\end{thm}

Let $L=(\mathbb Z^{d+1},Q)$ be the lattice attached to $Q$. Put $m=d+1$ and $k=\frac m2.$ Thus $m\ge 5$ and $m-2=d-1$. The rational points of height $n$ are counted by primitive integral solutions of $Q(x)=n^2.$ Accordingly, define
$$
r_Q^{\mathrm{prim}}(n^2) := \# \{x \in L: Q(x)=n^2, x \text{ primitive} \}.
$$
It is enough to prove that $r_Q^{\mathrm{prim}}(n^2)\asymp_Q n^{m-2}$ for all $n$ in some arithmetic progression.

\textbf{Step 1: The genus main term.}
Recall that
$$
\Theta_Q(z) = \sum_{x \in L} q^{Q(x)} = \sum_{n=0}^\infty r_Q(n) q^n  \in M_k(N,\chi).
$$
Let $\mathrm{gen}(L)$ be the genus of $L$ and define its mass
$$
\mathrm{mass}(\mathrm{gen}(L)) = \sum_{[L'] \in \mathrm{gen}(L)} \frac{1}{|\mathrm{Aut}(L')|}.
$$
The weighted average of the theta series over the genus 
$$
\Theta_{\mathrm{gen}(L)}(z) = \frac{1}{\mathrm{mass}(\mathrm{gen}(L)) } \sum_{[L'] \in \mathrm{gen}(L)} \frac{\Theta_{L'}(z)}{|\mathrm{Aut}(L')|} =: \sum_{n=0}^\infty r_{\mathrm{gen}(L)}(n) q^n.
$$
is a modular form of weight $k$ and level $N$. Siegel's formula \cite{SchulzePillot2004Survey} says that 
$$
r_{\mathrm{gen}(L)}(n) = c_\infty(L) n^{k-1} \prod_{p} \alpha_p(L,n),
$$
where $c_\infty(L) > 0$ and the $p$-adic local density  $\alpha_p(L,n)$  is defined by 
$$
\alpha_p(L,n) = \lim_{a\to\infty} p^{-a(m-1)} \, \#\{x\in (L/p^a L) : Q(x) \equiv n \pmod{p^a} \};
$$
see \cite{Yang1998}. Moreover, $\Theta_Q- \Theta_{\mathrm{gen}(L)} \in S_k(N,\chi)$ is cuspidal. The decomposition $\Theta_Q = \Theta_{\mathrm{gen}(L)} +(\Theta_Q - \Theta_{\mathrm{gen}(L)})$ separates the Eisenstein and cuspidal parts. Siegel formula gives the main term, while the cusp contribution at square indices is $O_Q( n^{k-1+\varepsilon})$ by Deligne's bound in integral weight, and by Shimura's correspondence followed by Deligne's bound in half-integral weight. Hence 
$$
r_Q(n^2) = c_\infty(L) n^{2k-2} \prod_{p} \alpha_p(L,n^2) + O_Q( n^{k-1+\varepsilon}).
$$

\textbf{Step 2: Passing to primitive representations.}
By Möbius inversion we have 
$$
r_Q^{\mathrm{prim}}(n^2) = \sum_{\delta\mid n} \mu(\delta)\,r_Q(n^2/\delta^2).
$$
Applying the previous estimate to $(n/\delta)^2$ gives
$$
r_Q^{\mathrm{prim}}(n^2) = c_\infty(L)n^{m-2} \sum_{\delta\mid n} \mu(\delta)\delta^{-(m-2)} \prod_p\alpha_p(L,n^2/\delta^2) + O_Q(n^{k-1+\varepsilon}).
$$

\textbf{Step 3: Factorization of the primitive genus term.}
We first record unit-square invariance of the local densities. If $u\in \mathbb Z_p^\times$, then multiplication by $u$ gives a bijection
$$
L/p^eL \to L/p^eL,\qquad x \mapsto ux.
$$
Since $Q(ux)=u^2Q(x),$ this bijection identifies the solutions of
$$
Q(x)\equiv t\pmod {p^e}
$$
with the solutions of
$$
Q(x)\equiv u^2t\pmod {p^e}.
$$
Hence $\alpha_p(L,u^2t)=\alpha_p(L,t)$ for every $u\in\mathbb Z_p^\times$.

Now let $\delta\mid n$ be squarefree and write $\delta=\prod_{p\mid n} p^{\epsilon_p}$ with $\epsilon_p \in \{0,1\}.$ For a fixed prime $p$, the factors coming from primes $q\ne p$ are $p$-adic unit squares. Hence, by unit-square invariance we have $\alpha_p(L,n^2/\delta^2) = \alpha_p(L,n^2/p^{2\epsilon_p}).$ We also have 
$$
\mu(\delta) \delta^{-(m-2)}  = \prod_{p \mid n} (-p^{-(m-2)})^{\epsilon_p}.
$$
It follows then that 
$$
\sum_{\delta\mid n} \mu(\delta)\delta^{-(m-2)} \prod_p\alpha_p(L,n^2/\delta^2) = \prod_p\beta_p(L,n^2),
$$
where
$$
\beta_p(L,n^2) :=
\begin{cases}
\alpha_p(L,n^2), & p\nmid n, \\ 
\alpha_p(L,n^2)-p^{2-m}\alpha_p(L,n^2/p^2), & p\mid n.
\end{cases}
$$
Consequently we have 
$$
r_Q^{\mathrm{prim}}(n^2) = c_\infty(L)n^{m-2} \prod_p\beta_p(L,n^2) + O_Q(n^{k-1+\varepsilon}).
$$

\textbf{Step 4: Identification with primitive local densities.}
Define the primitive local density  
$$
\alpha_p^{\mathrm{prim}}(L,n^2) := \lim_{e\to\infty} p^{-e(m-1)}
\#\big\{x\in L/p^eL: Q(x) \equiv n^2 \pmod{p^e}, \,  p\nmid x \text{ if } p\mid n\big\}. 
$$
If $p\nmid n$, the condition $p\nmid x$ is automatic, so $\alpha_p^{\mathrm{prim}}(L,n^2)=\alpha_p(L,n^2).$
Now suppose $p\mid n$. Let 
$$
R_{p^e}(n^2) = \{x\in L/p^eL: Q(x)\equiv n^2\pmod {p^e} \},
$$
and let $R_{p^e}^{\mathrm{div}}(n^2)$ be the subset of such classes with $p\mid x$.  Multiplication by $p$ gives a bijection
$$
L/p^{e-1}L \xrightarrow{\sim} pL/p^eL, \qquad y \mapsto py.
$$
Under this bijection, for $e\ge 2$, the set $R_{p^e}^{\mathrm{div}}(n^2)$ corresponds to the set of classes $y\in L/p^{e-1}L$ satisfying
$$
Q(y)\equiv (n/p)^2\pmod {p^{e-2}}. 
$$ 
Since each class modulo $p^{e-2}$ has $p^m$ lifts modulo $p^{e-1}$ it follows that 
$$
\# R_{p^e}^{\mathrm{div}}(n^2) = p^m \#R_{p^{e-2}}((n/p)^2).
$$
After multiplying by $p^{-e(m-1)}$ and passing to the limit, we obtain
$$
\alpha_p^{\mathrm{prim}}(L,n^2) = \alpha_p(L,n^2) - p^{2-m}\alpha_p(L,(n/p)^2).
$$
Therefore $\alpha_p^{\mathrm{prim}}(L,n^2)=\beta_p(L,n^2)$ for all primes $p$ and all $n$.

\textbf{Step 5: A uniform lower bound for the local product.}
We now prove, after restricting $n$ to a suitable arithmetic progression, that
$$
\prod_p\beta_p(L,n^2) = \prod_p\alpha_p^{\mathrm{prim}}(L,n^2)
$$
is bounded below by a positive constant depending only on $Q$. 

Let $p\nmid N$ be a good prime. Thus $p$ is odd and the reduction $\overline Q$ is nondegenerate over $\mathbb F_p$.  Put $a=\overline{n^2} \in \mathbb F_p.$ For $e\ge 1$, set
$$
S_e := \{x \in L/p^eL: Q(x) \equiv n^2 \pmod {p^e}, p \nmid x \text{ if } p\mid n \}.
$$
Also define
$$
Z_p(a) := \left\{ x \in \mathbb F_p^m: \overline Q(x)=a, x \ne 0 \text{ if } a=0 \right\}.
$$
Since $a=0$ exactly when $p\mid n$, reduction modulo $p$ gives a map $ S_e \to Z_p(a)$. Every element of $Z_p(a)$ is nonsingular. Hence Hensel lifting implies that each element of $Z_p(a)$ has exactly $p^{(e-1)(m-1)}$ lifts to an element of $S_e$. Hence $\# S_e =p^{(e-1)(m-1)}\#Z_p(a).$ Therefore 
$$
\alpha_p^{\mathrm{prim}}(L,n^2) = \lim_{e\to\infty} p^{-e(m-1)} \#S_e = p^{-(m-1)}\#Z_p(a).
$$

By the standard finite-field estimate for a nondegenerate quadratic form in $m$ variables \cite[Theorems 6.26 and 6.27]{LidlNiederreiter1997}, we have $\#Z_p(a) = p^{m-1}+O_L(p^{m/2}).$ Since $m\ge 5$, we have
$$
\alpha_p^{\mathrm{prim}}(L,n^2)=1+O_L(p^{-3/2})
$$
uniformly in $n$ for all good primes $p\nmid N$. Consequently, there exists a constant $A>0$ such that
$$
\left|\alpha_p^{\mathrm{prim}}(L,n^2)-1\right| \le \frac{A}{p^{3/2}}
$$
for all $p\nmid N$ and all $n$. Choose $P_0$ large enough such that $\frac{A}{p^{3/2}} \le \frac12$ for all primes $p>P_0$. Let
$$
\mathcal T := \{p : p \mid N\} \cup \{ p:p \le P_0 \}.
$$
Then we have 
$$
\prod_{p\notin\mathcal T} \alpha_p^{\mathrm{prim}}(L,n^2) \ge \prod_{p\notin\mathcal T} \left(1-\frac{A}{p^{3/2}}\right) =: C_\infty(L,\mathcal T)>0.
$$
The positivity follows because $ \sum_{p \notin \mathcal T} \frac{A}{p^{3/2}} < \infty.$ 

It remains to control the finitely many primes in $\mathcal T$. The height zeta function $R_Q(s)=\sum_{n\ge 1} |\Omega_n| n^{-s}$ has a simple pole at $s=d$, see \cite{BurrinGroebner2024}. This guarantees the existence of a rational point on $\mathcal E_Q$. Writing it in primitive integral form gives a vector $x_0\in L$ and an integer $n_0\ge 1$ such that $Q(x_0)=n_0^2$ and $x_0$ is primitive. Hence $\Omega_{n_0} \ne \emptyset$. Since $x_0$ is primitive over $\mathbb Z$, it is primitive over $\mathbb Z_p$ for every prime $p$. Hence $n_0^2$ is primitively represented over $\mathbb Z_p$ for every $p$, and therefore $\alpha_p^{\mathrm{prim}}(L,n_0^2)>0$ for every prime $p$.
Now set
$$
M = \prod_{p\in\mathcal T}p^{v_p(n_0)+1}.
$$
If $n\equiv n_0\pmod M,$ then for every $p\in\mathcal T$ we have $v_p(n)=v_p(n_0).$ Indeed, writing $n_0 = p^{v_p(n_0)} u_0$ with $u_0 \in \mathbb Z_p^\times$, the congruence $n\equiv n_0\pmod {p^{v_p(n_0)+1}}$ implies $ n/p^{v_p(n_0)} \equiv u_0\pmod p,$ so $p\nmid n/p^{v_p(n_0)}$.
Thus, for every $p\in\mathcal T$ we have $u_p := n/n_0 \in \mathbb Z_p^\times.$ By unit-square invariance we have 
$$
\alpha_p^{\mathrm{prim}}(L,n^2) = \alpha_p^{\mathrm{prim}}(L,u_p^2n_0^2) = \alpha_p^{\mathrm{prim}}(L,n_0^2),
$$
and therefore
$$
\prod_{p\in\mathcal T} \alpha_p^{\mathrm{prim}}(L,n^2) = \prod_{p\in\mathcal T} \alpha_p^{\mathrm{prim}}(L,n_0^2) =: C_{\mathcal T}(Q,n_0)>0.
$$
Combining the finite and infinite parts, we obtain
$$
\prod_p\beta_p(L,n^2) = \prod_p\alpha_p^{\mathrm{prim}}(L,n^2) \ge C_{\mathcal T}(Q,n_0) C_\infty(L,\mathcal T) =:C(Q) > 0
$$
for every $n\equiv n_0\pmod M$.

For the upper bound, the same estimates give
$$
\prod_{p\notin\mathcal T} \alpha_p^{\mathrm{prim}}(L,n^2) \le \prod_{p\notin\mathcal T} \left( 1+\frac{A}{p^{3/2}} \right) < \infty.
$$
Moreover, for $p\in\mathcal T$ and $n\equiv n_0\pmod M$ we have 
$$
\prod_{p\in\mathcal T}\alpha_p^{\mathrm{prim}}(L,n^2) = \prod_{p\in\mathcal T}\alpha_p^{\mathrm{prim}}(L,n_0^2) < \infty.
$$
Together with the lower bound already proved, we have
$$
1 \ll_Q \prod_p \alpha_p^{\mathrm{prim}}(L,n^2) \ll_Q 1.
$$

\textbf{Step 6: Concluding.}
Recall that 
$$
r_Q^{\mathrm{prim}}(n^2) = c_\infty(L) \left(\prod_p \alpha_p^{\mathrm{prim}} (L,n^2)\right)n^{m-2} + O_Q(n^{k-1+\varepsilon}).
$$
Choose $\varepsilon>0$ such that $k-1+\varepsilon<m-2.$ Since the local product is bounded above and below on the arithmetic progression, the main term is $\asymp_Q n^{m-2}$, while the error term is $o(n^{m-2})$. Hence, after replacing this congruence class by a tail arithmetic progression 
$$ \mathcal S=\{n_1+\ell M:\ell\ge 0\}, \qquad n_1\equiv n_0\pmod M, 
$$
we obtain
$$
r_Q^{\mathrm{prim}}(n^2)\asymp_Q n^{m-2} = n^{d-1},
$$
and hence $|\Omega_n|\asymp_Q n^{d-1}$ for all $n \in \mathcal S$.

\section{Proof of the Eichler commutation relation} \label{appendix:Eichler}

The goal of this section is to give a self-contained proof of the Eichler commutation relation \eqref{Eichler_commutation_relation}. We follow closely the arguments in \cite{Ponomarev81,bocherer1994mellin}. 

 Fix an odd prime $p$ throughout this appendix.  A $p$-neighbor of a lattice $L \subset B$ is a lattice $K$ such that $pK \subset L$ and $K\neq L$. The localization of a $\Z$-lattice $L$ at $p$ is the $\Z_p$-lattice $L_p = L \otimes_\Z \Z_p$. The genus of a lattice $L$ consists of all lattices $K$ in $B$ which are (everywhere) locally isometric to $L$, that is, if and only if for all primes $\ell$ we can write $L_\ell = \alpha_\ell K_\ell \alpha_\ell^{-1}$ for some $\alpha_\ell \in B_\ell^\times$. We write $K \sim L$ if $K$ is a $p$-neighbor of $L$ and belongs to the same genus as $L$.

Let $\Lambda$ be as in \cref{subsection:thetaseries_quaternion}. Central to the argument is to count the $p$-neighbors of $\Lambda$ that lie in the same genus as $\Lambda$. This problem can be studied locally.  Since $p \neq 2$, we have $(\Z+2\mathcal O)_p=\mathcal O_p$. Moreover, $B_p \cong M_2(\Q_p)$ and we choose such an isomorphism so that the
maximal order $\mathcal O_p$ is identified with $M_2(\Z_p)$. If $K \sim \Lambda$, then the corresponding local order is of the form $\alpha_p^{-1} M_2(\Z_p) \alpha_p$ for some $\alpha_p \in B_p^\times \simeq M_2(\Q_p)^\times$. The condition that $pK\subset\Lambda$ gives
$$
p \alpha_p^{-1} M_2(\Z_p)\alpha_p \subset M_2(\Z_p) ,
$$
and the condition $K \neq \Lambda$ gives 
$$
\alpha_p^{-1} M_2(\Z_p) \alpha_p \neq M_2(\Z_p).
$$
 The first condition implies that, after scaling by an element of $\Q_p^\times$, we may write $\alpha_p \in M_2(\Z_p)$ with $\det(\alpha_p) = u$ or $pu$ for some unit $u \in \Z_p^\times$. The second condition excludes the first case. Moreover, left multiplication by an element of $GL_2(\Z_p)$ does not change the corresponding local order $\alpha_p^{-1} M_2(\Z_p) \alpha_p$. Hence, multiplying on the left by $\mathrm{diag}(u^{-1},1) \in GL_2(\Z_p)$ if necessary, we may assume that $\det(\alpha_p)=p$. Every such class admits a representative of the form 
\begin{align}  \label{M2(Zp)_detp}
\left\lbrace
\begin{pmatrix}
    p^a & c \\
    0 & p^b 
\end{pmatrix} \mid a,b \in \Z_{\geq 0} \text{ with } a+b=1, c \in \Z/(p^b) \right\rbrace;
\end{align}
see \cite[Chapter II, Theorem 2.3]{Vigneras1980}. This gives a set of $p+1$ representatives for these classes.   

In the following we write $R(n,\Lambda) = \{ v \in \Lambda : \mathrm{nr}(v) = n \}$ and recall that 
$$
r_{\Lambda, P}(n) = \sum_{v \in R(n,\Lambda)} P(v).
$$

\begin{lm} \label{lm:nr_cases}
Let $p\neq 2$ be prime and fix $v \in R(p^2n,\Lambda)$. Then
    \begin{align*}
     \pi(v,\Lambda) &:= \# \{ p\text{-neighbors } K \text{ of } \Lambda \mid v \in pK  ,\,  K       \in \text{gen}(\Lambda) \} \\ 
         &= 
         \begin{cases*}
            1 & if  $v \in \Lambda$ and $v \not\in p\Lambda$; \\
            1 + \left(\dfrac{-n}{p}\right) & if $v \in p\Lambda$ and $v \not\in     p^2\Lambda$; \\
            p+1 & if  $v \in p^2\Lambda$.
        \end{cases*}
    \end{align*}
\end{lm}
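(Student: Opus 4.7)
Since $K\sim\Lambda$ implies $K_q=\Lambda_q$ at all primes $q\neq p$, the condition $v\in K$ depends only on the local lattice $K_p$. Using $B_p\cong M_2(\Q_p)$ and the parametrization of $p$-neighbors in the genus recalled in the text, the set of possible $K_p$ is finite of cardinality $p+1$. The appropriate bookkeeping is via the mod-$p$ reduction $\bar\Lambda:=\Lambda_p/p\Lambda_p$, a $3$-dimensional $\F_p$-vector space carrying the non-degenerate quadratic form $\bar q$ induced by the reduced norm. Since $B$ is split at the odd prime $p$, $\bar q$ is isotropic, and the set of isotropic lines $\ell\subset\bar\Lambda$ has cardinality exactly $p+1$; these lines are in bijection with the $p+1$ neighbors $K_p(\ell)$, constructed from a suitable lift $\tilde u\in\Lambda_p$ of a generator of $\ell$ (normalized so that $\mathrm{nr}(\tilde u)\in p^2\Z_p$) as $K_p(\ell)=L'_\ell+\Z_p\cdot(\tilde u/p)$, where $L'_\ell=\{x\in\Lambda_p:\langle x,\tilde u\rangle\in p\Z_p\}$. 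My plan is to translate the membership condition $v\in K_p(\ell)$ into a linear condition relating $\bar v$ (or $\bar w$, when $v=pw$) and $\ell$, and then case-split on the $p$-adic valuation of $v$ in $\Lambda_p$.

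In Case~1 ($v\in\Lambda\setminus p\Lambda$), $\bar v\neq 0$ and $\bar q(\bar v)\equiv p^2n\equiv 0\pmod p$, so $\bar v$ is isotropic in $\bar\Lambda$. Unraveling the construction of $K_p(\ell)$ shows $v\in K_p(\ell)$ forces $\ell=\F_p\bar v$, and conversely this single choice of $\ell$ works, giving $\pi(v,\Lambda)=1$. In Case~3 ($v\in p^2\Lambda$), an index and discriminant bookkeeping for lattices in the genus---using $pK\subset\Lambda$ together with $\mathrm{disc}(K)=\mathrm{disc}(\Lambda)$---yields the inclusion $p^2\Lambda\subset K$ for every $p$-neighbor $K$ in the genus, hence $v\in K$ for all $p+1$ neighbors.

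The crucial case is Case~2 ($v\in p\Lambda\setminus p^2\Lambda$): write $v=pw$ with $w\in\Lambda\setminus p\Lambda$ and $\mathrm{nr}(w)=n$. Unraveling the Kneser-type construction then translates $pw\in K_p(\ell)$ into the orthogonality relation $\bar w\in\ell^\perp$ with respect to the polar bilinear form of $\bar q$. Counting isotropic lines $\ell\subset\bar\Lambda$ contained in the hyperplane $\bar w^\perp$ reduces to counting isotropic points of the restriction of $\bar q$ to $\bar w^\perp$---a non-degenerate binary quadratic form whose discriminant equals $-\bar q(\bar w)\equiv -n\pmod p$ up to a non-zero square in $\F_p^\times$. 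The standard Legendre-symbol criterion for binary quadratic forms over $\F_p$ then yields $1+\left(\tfrac{-n}{p}\right)$ isotropic lines. The main technical obstacle is precisely this translation from ``$pw\in K_p(\ell)$'' to the orthogonality $\bar w\in\ell^\perp$, together with the identification of the discriminant of the induced binary form as $-n$ modulo squares; both require a careful local analysis of the Kneser construction, after which the Legendre symbol emerges from the standard conic point count over $\F_p$.
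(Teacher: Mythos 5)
Your route via Kneser $p$-neighbors and isotropic lines in $\bar\Lambda=\Lambda/p\Lambda$ is a genuinely different, more conceptual alternative to the paper's explicit computation in $M_2(\Z_p)$ with upper-triangular coset representatives of determinant $p$; the conic point-count over $\F_p$ is exactly the right device for producing $1+\left(\tfrac{-n}{p}\right)$, and your Case~1 and Case~3 arguments are fine. The gap is in Case~2. Every Kneser $p$-neighbor satisfies $p\Lambda_p\subset L'_\ell\subset K_p(\ell)$, so the condition ``$pw\in K_p(\ell)$'' holds vacuously for all $p+1$ lines $\ell$ and cannot be equivalent to $\bar w\in\ell^\perp$. (Indeed, your own Case~3 bookkeeping already gives $p\Lambda\subset K$ for every neighbor in the genus, which under a literal reading of the lemma's membership condition would force $\pi=p+1$ in Case~2 as well.) The condition you actually need to unravel is $w\in K_p(\ell)$, equivalently $w\in L'_\ell$, and it is \emph{this} that reduces to $\bar w\in\ell^\perp$ and feeds the conic count. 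This is what the paper's own proof computes --- it conjugates $p^{-1}v$, with the factor of $p$ stripped off --- and what the application in display \eqref{eq:pivL} requires, since the change of variables from $R(n,K)$ to $R(p^2n,\Lambda)$ sends $v'\mapsto pv'$, so $\pi(v,\Lambda)$ must count those $K$ with $v\in pK$. In other words, the membership condition in the lemma's displayed definition should be read as $p^{-1}v\in K$ (equivalently $v\in pK$) rather than $v\in K$. Once you make that correction, your orthogonality-to-conic reduction is sound, and the discriminant bookkeeping (the restriction of the unimodular ternary form on $\bar\Lambda$ to $\bar w^\perp$ has determinant $\equiv n$ modulo squares) does give $1+\left(\tfrac{-n}{p}\right)$ isotropic lines.
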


\begin{proof}
We only prove the case $v \in p\Lambda \setminus p^2\Lambda$; the proof of the other two cases is similar. After passing to the localization at $p$, we may suppose that $v \in p M_2(\Z_p)$ and $v \not\in p^2 M_2(\Z_p)$.  After conjugation by an element of $GL_2(\Z_p)$,  we may assume that 
$$
v = p \begin{pmatrix}
    0 & n \\
    -1 & 0
\end{pmatrix}  .
$$
Then $ \pi(v,\Lambda)$ is the number of elements $\alpha_p$ as in \cref{M2(Zp)_detp}  that satisfy
\begin{align} \label{v_pneighbors}
\alpha_p \begin{pmatrix}
    0 & n \\
    -1 & 0
\end{pmatrix}  \alpha_p^{-1}  \in M_2(\Z_p) .
\end{align}
A direct computation shows that \cref{v_pneighbors} is satisfied if and only if $\alpha_p$ as in \cref{M2(Zp)_detp} satisfies $b=1$ and $c^2 + n \equiv 0 \,\,  (\text{mod } p)$. This quadratic equation modulo $p$ has  
$$
1 + \left(\dfrac{\mathrm{disc}(c^2+n)}{p}\right) = 1 + \left(\dfrac{-4n}{p}\right) = 1 + \left(\dfrac{-n}{p}\right) 
$$
solutions.
\end{proof}
We now prove the Eichler commutation relation. According to \cref{lm:nr_cases} we can write
\begin{align} \label{eq:pivL}
    \sum_{v \in R(p^2 n,\Lambda)} P(v) \pi(v, \Lambda) &= r_{\Lambda, P}(p^2 n) + \left(\dfrac{-n}{p}\right)  \sum_{\substack{ v \in R(p^2 n,\Lambda) \\ v \in p\Lambda  \\ v \not\in p^2\Lambda}}  P(v)  +  p \sum_{\substack{v \in R(p^2 n,\Lambda) \\ v \in p^2\Lambda}}  P(v) .
\end{align}
For $p^2 \mid n$,  the right side of \cref{eq:pivL} equals 
\begin{align*}
r_{\Lambda, P}(p^2 n) +  p^{1 + 2 \nu} \, r_{\Lambda, P}(n/p^2)
\end{align*}
(where $r_{\Lambda, P}(n/p^2)= 0 $ if $p^2 \nmid n$), while for $p \nmid n$ it equals 
\begin{align*}
r_{\Lambda, P}(p^2 n) +  p^\nu \left(\dfrac{-n}{p}\right) r_{\Lambda, P}(n) .
\end{align*}

On the other hand, expanding the definition of $\pi$ gives 
\begin{align*}
 \sum_{v \in R(p^2 n,\Lambda)} P(v) \pi(v, \Lambda)  &=  \sum_{K \sim \Lambda}  \sum_{v \in R(p^2n,\Lambda) \cap pK} P(v).
\end{align*}
Since the Hurwitz order $\mathcal O$ has class number one, the local
classification of $p$-neighbors globalizes: every $p$-neighbor $K \sim \Lambda$ is of the form $K=\gamma \Lambda \gamma^{-1}=p^{-1} \gamma \Lambda \bar{\gamma}$ for some $\gamma \in \mathcal O$ with $\mathrm{nr}(\gamma)=p$. The element $\gamma$ is unique up to right multiplication by $\mathcal O^\times$. For such $K$, the map $v \mapsto \gamma v\bar{\gamma}$ gives a bijection between $R(n,\Lambda)$ and $R(p^2n,\Lambda) \cap pK$. Because $P$ is homogeneous of degree $\nu$ we have $P(\gamma v\bar{\gamma}) = p^\nu P(\gamma v \gamma^{-1})$. Therefore 
\begin{align*}
 \sum_{v \in R(p^2 n,\Lambda)} P(v) \pi(v, \Lambda)   &= \frac{p^\nu}{| \mathcal{O}^\times| } \sum_{\substack{\gamma \in \mathcal{O} \\  \mathrm{nr}(\gamma) = p }}  \sum_{v \in R(n,\Lambda)} P(\gamma v \gamma^{-1}) \\
  &=  p^\nu \sum_{v \in R(n,\Lambda)} (\tilde{T}_p P)(v).
\end{align*}
 Recalling that the Fourier coefficients $b(n)$ of the image of a modular form $\sum a(n) q^n \in M_{3/2 + \nu}$ under the Hecke operator $T_{p^2}$ satisfy 
$$
 b(n) =  a(p^2 n)  +  p^\nu \left(\dfrac{-n}{p}\right) a(n)  +  p^{1 + 2 \nu} a(n/p^2)
$$
(where again $a(n/p^2) = 0 $ if $p^2 \nmid n$), we conclude from the above expressions that  $T_{p^2} \Theta_{\Lambda,P} = p^\nu \Theta_{\Lambda, \tilde{T}_p P}$. 
\bibliographystyle{alpha}
\bibliography{biblio}

\end{document}